\newcommand{\Input}{\mathcal{V}}
\def\C{\mathbb{C}}
\def\R{\mathbb{R}}
\def\F{\mathbb{F}}
\def\SS{\mathcal{S}}
\def\rank{\mathrm{rank}}
\newcommand{\lowbiglquote}[1][18]{%
   \setbox0=\hbox{\fontsize{#1}{0}\selectfont``}%
   \setlength{\dimen0}{\ht0 - \heightof{A}}%
   \noindent\llap{\smash{\lower\dimen0\box0 }}}
\newcommand{\lowbigrquote}[1][18]{%
   \setbox0=\hbox{\fontsize{#1}{0}\selectfont''}%
   \setlength{\dimen0}{\ht0 - \heightof{A}}%
   \unskip\rlap{\smash{\lower\dimen0\box0 }}}
\def\Prob{\mathbb{P}}
\newcommand{\Expect}{\operatorname{\mathbb{E}}}
\newcommand{\proofof}[1]{\par{\it Proof of {#1}}. \ignorespaces}
\newcommand{\diff}[1]{\mathrm{d}{#1}}
\newcommand{\normal}{\mathcal{N}}
\newcommand{\uniform}{\mathcal{U}}
\newcommand{\ginibre}{\mathcal{G}}
\newcommand{\CUE}{\mathrm{CUE}}
\newcommand{\CRE}{\mathrm{CRE}}
\newcommand{\Beta}{\mathrm{B}}
\def\endproof{\vbox{\hrule height0.6pt\hbox{%
   \vrule height1.3ex width0.6pt\hskip0.8ex
   \vrule width0.6pt}\hrule height0.6pt
  }}
\DeclareMathOperator{\diag}{diag}
\spnewtheorem{theorem}{Theorem}[section]{\bfseries}{\itshape}
\spnewtheorem{lemma}[theorem]{Lemma}{\bfseries}{\itshape}
\spnewtheorem{definition}[theorem]{Definition}{\bfseries}{\itshape}
\spnewtheorem{remark}[theorem]{Remark}{\bfseries}{\upshape}
\spnewtheorem{example}[theorem]{Example}{\bfseries}{\upshape}
\spnewtheorem{corollary}[theorem]{Corollary}{\bfseries}{\itshape}
\spnewtheorem{proposition}[theorem]{Proposition}{\bfseries}{\itshape}
\spnewtheorem{algorithm}[theorem]{Algorithm}{\bfseries}{\upshape}
\numberwithin{theorem}{section}
\begin{document}

\title{Wilkinson's bus: Weak condition numbers,\\ with an application to singular polynomial eigenproblems}

\titlerunning{Weak condition numbers, with applications}        

\author{Martin Lotz         \and
        Vanni Noferini 
}

\institute{M. Lotz \at
              Mathematics Institute, The University of Warwick.
              \email{martin.lotz@warwick.ac.uk}           
           \and
           V. Noferini \at
              Department of Mathematics and Systems Analysis, Aalto University.
              \email{vanni.noferini@aalto.fi}  
}

\date{Received: date / Accepted: date}

\maketitle

\begin{abstract}
We propose a new approach to the theory of conditioning for numerical analysis problems for which both classical and stochastic perturbation theory fail to predict the observed accuracy of computed solutions. To motivate our ideas, we present examples of problems that are discontinuous at a given input and even have infinite stochastic condition number, but where the solution is still computed to machine precision without relying on structured algorithms. 
Stimulated by the failure of classical and stochastic perturbation theory in capturing such phenomena, we define and analyse a weak worst-case and a weak stochastic condition number. This new theory is a more powerful predictor of the accuracy of computations than existing tools, especially 
when the worst-case and the expected sensitivity of a problem to perturbations of the input is not finite. We apply our analysis to the computation of simple eigenvalues of matrix polynomials, including the more difficult case of singular matrix polynomials.  
In addition, we show how the weak condition numbers can be estimated in practice.

\keywords{condition number \and stochastic perturbation theory \and weak condition number \and polynomial eigenvalue problem \and singular matrix polynomial}
\subclass{15A15 \and 15A18 \and 15B52 \and 60H99 \and 65F15 \and 65F35}
\end{abstract}

\section{Introduction}
The \emph{condition number} of a computational problem 
measures the sensitivity of an output with respect to perturbations in the input. 
If the input-output relationship can be described by a differentiable function $f$ near the input, 
then the condition number is the norm of the derivative of $f$. In the case of solving systems of linear equations, the idea of conditioning dates back at least to the work of von Neumann and Goldstine~\cite{Neumann1947} and Turing~\cite{Turing1948}, who coined the term.
For an algorithm computing $f$ in finite precision arithmetic, the importance of the condition number $\kappa$ stems from the ``rule of thumb'' popularized by N.~J.~Higham~\cite[\S 1.6]{Higham1996},
$$\mathrm{forward} \ \mathrm{error} \lesssim \kappa \cdot (\mathrm{backward} \ \mathrm{error}).$$
The backward error is small if the algorithm computes the exact value of $f$ at a {\em nearby} input, and a small condition number would certify that this is enough to get a small overall error.
Higham's rule of thumb comes from a first order expansion, and in practice it often holds as an approximate equality and is valuable for practitioners
who wish to predict the accuracy of numerical computations. Suppose that a solution is computed with, say, a backward error equal to $10^{-16}$. If $\kappa=10^2$ then one would trust the computed value to have (at least) $14$ meaningful decimal digits.

The condition number can formally still be defined when $f$ is not differentiable, though it may not be finite. 
If $f$ is not locally Lipschitz continuous at an input, then the condition number is $+ \infty$; a situation clearly beyond the applicability of Higham's rule. 
Inputs at which the function $f$ is not continuous are usually referred to as {\em ill-posed}. Based on the worst-case sensitivity one would usually only expect a handful of correct digits when evaluating a function at such an input, and quite possibly 
none.\footnote{The number of accurate digits that can be expected when the problem is continuous but not locally Lipschitz continuous requires a careful discussion. It depends on the unit roundoff $u$, on the exact nature of the pathology of $f$, and on $D$. For example, computing the eigenvalues of a matrix similar to an $n\times n$ Jordan block for $n>1$ is H\"older continuous with exponent $1/n$ but not Lipschitz continuous. 
Usually this translates into expecting only about $\sqrt[n]{u}$ accuracy, up to constants, when working in finite precision arithmetic. For a more complete discussion, see \cite{GW1976}, where pathological examples of derogatory matrices are constructed, whose eigenvalues are not sensitive to finite precision computations (for fixed $u$), or also~\cite[\S 3.3]{konstantinov2003perturbation}.
For discontinuous $f$, however, these subtleties alone cannot justify any accurately computed decimal digits.}
On the other hand, a small condition number is
 not a necessary condition for a small forward-backward error ratio: it is not inconceivable that certain ill-conditioned or even ill-posed problems can be solved accurately.
Consider, for example, the problem of computing an eigenvalue of the $4 \times 4$ matrix pencil (linear matrix polynomial)
\begin{equation}\label{eq:pencil-example}
L(x)=\begin{bmatrix}
-1&1&4&2\\
-2&3&12&6\\
1&3&11&6\\
2&2&7&4\end{bmatrix} x+\begin{bmatrix}2&-1&-5&-1\\
6&-2&-11&-2\\
5&0&-2&0\\
3&1&3&1\end{bmatrix};
\end{equation}
this is a singular matrix pencil (the determinant is identically zero) whose only finite eigenvalue is simple and equal to $1$ (see Section \ref{sec:eigencond} for the definition of an eigenvalue of a singular matrix polynomial and other relevant terminology). The input is $L(x)$ and the solution is $1$. If the QZ algorithm \cite{QZ}, which is the standard eigensolver for pencils, is called via {\tt MATLAB}'s command {\tt eig}~\footnote{MATLAB R2016a on Ubuntu 16.04}, the output is:
\begin{lstlisting}
>> eig(L0,-L1)

ans =

	  -138.1824366539536
	  -0.674131242894470
	   1.000000000000000
	   0.444114486065683
\end{lstlisting}
All but the third computed eigenvalues are complete rubbish. This is not surprising: singular pencils form a proper Zariski closed set in the space of matrix pencils of a fixed format, and it is unreasonable to expect that an unstructured algorithm would detect that the input is singular and return only one eigenvalue. Instead, being backward stable, QZ computes the eigenvalues of some nearby matrix pencil, and almost all nearby pencils have $4$ eigenvalues. On the other hand, the accuracy of the approximation of the genuine eigenvalue $1$ is quite remarkable. Indeed, the condition number of the problem that maps $L(x)$ to the exact eigenvalue $1$ is infinite because the map from matrix pencils to their eigenvalues is discontinuous at any matrix pencil whose determinant is identically zero. To make matters worse, there exist plenty of matrix pencils arbitrarily close to $L(x)$ and whose eigenvalues are \emph{all} nowhere near $1$. For example, for any $\epsilon > 0$, 
$$\hat{L}(x)=L(x)+\epsilon \left( \begin{bmatrix}0&-1&-4&-1\\
1&-3&-13&-3\\
0&-2&-8&-2\\
-1&-1&-3&-1\end{bmatrix} x + A\right),$$ 
where
$$A=\begin{bmatrix}-1&-1&-3&-2\\
-3&-3&-9&-6\\
-2&-2&-6&-4\\
-1&-1&-3&-2\end{bmatrix} \gamma_0 + \begin{bmatrix}1&0&0&0\\
3&0&0&0\\
2&0&0&0\\
1&0&0&0\end{bmatrix} \gamma_1 + \begin{bmatrix}0&-1&-4&-1\\
0&-3&-12&-3\\
0&-2&-8&-2\\
0&-1&-4&-1\end{bmatrix} \gamma_2 + \begin{bmatrix}0&0&0&0\\
-1&0&1&0\\
0&0&0&0\\
1&0&-1&0\end{bmatrix} \gamma_3,$$
has characteristic polynomial $\epsilon^2(\gamma_3-x)(x^3+\gamma_2 x^2 + \gamma_1 x + \gamma_0)$ and therefore, by an arbitrary choice of the parameters $\gamma_i$, can have eigenvalues literally \emph{anywhere}. Yet, unaware of this worrying caveat, the QZ algorithm computes an excellent approximation of the exact eigenvalue: $16$ correct digits! This example has not been carefully cherry-picked: readers are encouraged to experiment with any singular input in order to convince themselves that QZ often computes\footnote{Of course, if the exact solution is not known a priori, one faces the practical issue of deciding which of the computed eigenvalues are reliable. There are various ways in which this can be done in practice, such as artificially perturbing the problem; the focus of our work is on explaining why the correct solution has been shortlisted in the first place; see~\cite{hmp18} for a more practical perspective.} accurately the (simple) eigenvalues of singular pencils, or singular matrix polynomials, in spite of being a discontinuous problem. See also~\cite{hmp18} for more examples and a discussion of applications. Although the worst-case sensitivity to perturbations is indeed infinite, the raison d'\^{e}tre of the condition number, which is to predict the accuracy of computations on a computer, is not fulfilled.

Why does the QZ algorithm accurately compute the eigenvalue, when the map $f$ describing this computational problem is not even continuous? Two natural attempts at explaining this phenomenon would be to look at {\em structured condition numbers} and/or {\em average-case (stochastic) perturbation theory}. 

\begin{enumerate}
\item An algorithm is \emph{structured} if it computes the exact solution to a perturbed input, where the perturbations respect some special features 
of the input: for example singular, of rank $3$, triangular, or with precisely one eigenvalue. The vanilla implementation of QZ used here is unstructured in the sense that it does not preserve any of the structures that would explain the strange case of the algorithm that computes an apparently uncomputable eigenvalue.\footnote{There exist algorithms able to detect and exploit the fact that a matrix pencil is singular, such as the staircase algorithm~\cite{VanDooren1979}.} It does, however, preserve the real structure. In other words, if the input is real, QZ computes the eigenvalues of a nearby \emph{real} pencil. Yet, by taking real $\gamma_i$ in the example above, it is clear that there are real pencils arbitrary close to $L(x)$ whose eigenvalues are all arbitrarily far away from $1$.
\item The classical condition number is based on the worst-case perturbation of an input; as discussed in \cite[\S 2.8]{Higham1996}, this approach tends to be overly pessimistic in practice. Numerical analysis pioneer James Wilkinson, in order to illustrate that Gaussian elimination is unstable in theory but in practice its instability is only observed by mathematicians looking for it, is reported to have said~\cite{trefethen2012smart}
$$
\lowbiglquote Anyone \ that \ unlucky \ has \ already \ been \ run \ over \ by \ a \ bus.
\lowbigrquote 
$$
In other words: in Wilkinson's experience, the likelihood of seeing the admittedly terrifying worst case appeared to be very small, and therefore Wilkinson believed that being afraid of the potential catastrophic instability of Gaussian elimination is an irrational attitude.
Based on this experience, Weiss et al.~\cite{weiss1986average}
and Stewart~\cite{stewart1990stochastic}
 proposed to study the effect of perturbations {\em on average}, as opposed to worst-case; see~\cite[\S 2.8]{Higham1996} for more references on work addressing the stochastic analysis of roundoff errors. This idea was later formalized and developed further by by Armentano~\cite{armentano2010stochastic}. This approach gives some hope to explain the example above, because it is known that the set of perturbations responsible for the discontinuity of $f$ has measure zero \cite{dd09}. However, this does not imply that on average perturbations are not harmful. In fact, as we will see, the stochastic condition number for the example above (or for similar problems) is still infinite! Average-case perturbation analysis, at least in the form in which it has been used so far, is still unable to solve the puzzle.
\end{enumerate}

While neither structured nor average-case perturbation theory can explain the phenomenon observed above, Wilkinson's colourful quote does contain a hint on how to proceed: shift  attention from average-case analysis of perturbations to bounding rare events. We will get back to the matrix pencil~\eqref{eq:pencil-example} in Example~\ref{ex:L-pencil-ex1}, where we show that our new theory does explain why this problem is solved to high accuracy using standard backward stable algorithms. 

In summary, the main contributions of this paper are

\begin{enumerate}
\item a new species of `weak' condition numbers, which we call the \emph{weak worst-case condition number} and the \emph{weak stochastic condition number}, that give a more accurate description of the perturbation behaviour of a computational map (Section~\ref{sec:strong});
\item a precise probabilistic analysis of the sensitivity of the problem of computing simple eigenvalues of singular matrix polynomials (Sections~\ref{sec:probanal} and~\ref{sec:main});
\item an illustration of the advantages of the new concept by demonstrating that, unlike both classical and stochastic condition numbers, the weak condition numbers \emph{are} able to explain why the apparently uncomputable eigenvalues of singular matrix polynomials, such as the eigenvalue $1$ in the example above, can be computed with remarkable accuracy (Example~\ref{ex:L-pencil-ex1});
\item a concrete method for bounding the weak condition numbers for the eigenvalues of singular matrix polynomials (Section~\ref{sec:compweak}).
\end{enumerate}

\subsection{Related work}
Rounding errors, and hence the perturbations considered, are not random~\cite[1.17]{Higham1996}. Nevertheless, the observation that the computed bounds on rounding errors are overly pessimistic has led to the study of statistical and probabilistic models for rounding errors.
An early example of such a statistical analysis is Goldstine and von Neumann~\cite{goldstine1951numerical}, see ~\cite[2.8]{Higham1996} and the references therein for more background. 
Recently, Higham and Mary~\cite{higham2018new} obtained probabilistic rounding error bounds for a wide variety of algorithms in linear algebra. In particular, they give a rigorous foundation to Wilkinson's rule of thumb, which states that constants in rounding error bounds can be safely replaced by their square roots. 

The idea of using an average, rather than a supremum, in the definition of conditioning was introduced by Weiss et.al.~\cite{weiss1986average} in the context of the (matrix) condition number of solving systems of linear equations, and a more comprehensive stochastic perturbation theory was developed by G.~W.~Stewart~\cite{stewart1990stochastic}. In~\cite{armentano2010stochastic}, Armentano introduced the concept of a smooth condition number, and showed that it can be related to the worst case condition. His work uses a geometric theory of conditioning and does not extend to singular problems. 

The line of work on random perturbations is not to be confused with the {\em probabilistic analysis of condition numbers}, where a condition number is a given function, and the distribution of this function is studied over the space of inputs (see~\cite{BC2013} and the references therein). Nevertheless, our work is inspired by the idea of weak average-case analysis~\cite{amelunxen2017average} that was developed in this framework. Weak average-case analysis is based on the observation, which has origins in the work of Smale~\cite{smale1981fundamental} and Kostlan~\cite{kostlan1988complexity}, that discarding a small set from the input space can dramatically improve the expected value of a condition number, shifting the focus away from the {\em average case} and
towards bounding the probability of rare events.
Our contribution is to apply this line of thought to study random perturbations instead of random inputs. However, we stress that we do not seek to model the distribution of perturbations. The aim is to formally quantify statements such as ``the set of bad perturbations is small compared to the set of good perturbations''. In other words, the (non-random) accumulation of rounding errors in a procedure would need a very good reason to give rise to a badly perturbed problem. 

The conditioning of regular polynomial eigenvalue problems has been studied in detail by Tisseur~\cite{tisseur2000backward} and by Dedieu and Tisseur in a homogeneous setting~\cite{dedieu2003perturbation}. 
A probabilistic analysis of condition numbers (for random inputs) for such problems was given by Armentano and Beltr\'an~\cite{armentano2017polynomial} over the complex numbers and by Beltr\'an and Kozhasov~\cite{beltran2018real} over the real numbers. Their work studies the distribution of the condition number on the whole space of inputs, and such an analysis only considers the condition number of regular matrix polynomials.
A perturbation theory for singular polynomial eigenvalue problems was developed by de Ter\'an and Dopico~\cite{dd10}, and our work makes extensive use of their results. A method to solve singular generalized eigenvalue problems with plain QZ, based on applying a certain perturbation to them, is proposed in \cite{hmp18} (see also the references therein); note that our work goes beyond this, by showing how to estimate the weak condition number that could guarantee, often with overwhelming probability, that QZ will do fine even without any preliminary perturbation step.

\subsection{Organization of the paper}
The paper is organized as follows. In Section \ref{sec:strong} we review the rigorous definitions of the worst-case (von Neumann-Turing) condition number and the stochastic framework (Weiss et.al., Stewart, Armentano), and comment on their advantages and limitations. We then define the weak condition numbers as quantiles and argue that, even when Wilkinson's metaphorical bus hits von Neumann-Turing's and Armentano-Stewart's theories of conditioning, ours comes well endowed with powerful dodging skills. 
In Section~\ref{sec:eigencond} we introduce the perturbation theory of singular matrix polynomials, along with the definitions of simple eigenvalues and eigenvectors. We define the input-output map underlying our case study and introduce 
the directional sensitivity of such problems.
In Section \ref{sec:probanal}, which forms the core of this paper, we carry out a detailed analysis of the probability distribution of the directional sensitivity of the problems introduced in Section~\ref{sec:eigencond}.
In Section~\ref{sec:main}, we translate the probabilistic results from Section~\ref{sec:probanal} into the language of weak condition numbers and prove the main results, Theorem~\ref{thm:main-complex} and Theorem~\ref{thm:main-real}. In Section~\ref{sec:compweak} we sketch how our new condition numbers can be estimated in practice. Along the way we derive a simple concentration bound on the directional sensitivity of regular polynomial eigenvalue problems.
Finally, in Section \ref{sec:conclusions}, we give some concluding remarks and discuss potential further applications.

\section{Theories of conditioning}\label{sec:strong}
For our purposes, a computational problem is a map between normed vector spaces~\footnote{One can, more generally, allow $\mathcal{V}$ and $\mathcal{W}$ to be anything with a notion of distance, such as general metric spaces or Riemannian manifolds. All the definitions of condition can be adapted accordingly; in this paper we focus on the case of normed vector spaces. We will also only need such a map to be defined locally near an input of interest.}
$$f:  \mathcal{V} \rightarrow \mathcal{W}, \qquad  D \mapsto S :=f(D),$$
and we will denote the (possibly different) norms in each of these spaces by $\|\cdot\|$.
 Following the remark on \cite[p. 56]{High:FM}, for simplicity of exposition in this paper we focus on absolute, as opposed to relative, condition numbers. The condition numbers considered depend on the map $f$ and an input $D\in \mathcal{V}$.

As we are only concerned with the condition of a fixed computational problem at a fixed input $D$, in what follows we omit reference to $f$ and $D$ in the notation.

\begin{definition}[Worst-case condition number]\label{def:turing}
The condition number of $f$ at $D$ is
$$\kappa = \lim_{\epsilon \rightarrow 0} \sup_{ \|E\|\leq 1} \frac{\|  f(D+\epsilon E) - f(D) \|}{\epsilon\|E\|} .$$
\end{definition}

If $f$ is Fr\'{e}chet differentiable at $D$, then this definition is equivalent to the operator norm of the Fr\'{e}chet derivative of $f$. 
However, Definition~\ref{def:turing} also applies (and can even be finite) when $f$ is not differentiable. In complexity theory \cite{BCSS1998,BC2013}, an elegant geometric definition of condition number is often used, which is essentially equivalent to Defintion \ref{def:turing} under certain assumptions (which include smoothness).

The following definition is loosely derived from the work of Stewart~\cite{stewart1990stochastic} and Armentano~\cite{armentano2010stochastic}, based on earlier work by Weiss et. al.~\cite{weiss1986average}. In what follows, we use the terminology $X\sim \mathcal{D}$ for a random variable with distribution $\mathcal{D}$, and $\Expect_{X\sim \mathcal{D}}[\cdot]$ for the expectation with respect to this distribution.

\begin{definition}[Stochastic condition number]\label{def:armentano}
Let $E$ be a $\mathcal{V}$-valued random variable with distribution $\mathcal{D}$ and assume that $\Expect_{E\sim \mathcal{D}}[E]=0$ and $\Expect_{E\sim \mathcal{D}}[\|E\|^2]=1$. Assume that the function $f$ is measurable.
Then the stochastic condition number is
$$\kappa_s = \lim_{\epsilon \rightarrow 0} \mathbb{E}_{E \sim \mathcal{D}} \frac{\|f(D+\epsilon E)-f(D)\|}{\epsilon \|E\|}.$$
\end{definition}

\begin{remark}
We note in passing that Definition \ref{def:armentano} depends on the choice of a measure $\mathcal{D}$. This measure is a parameter that the interested mathematician should choose as convenient; this is of course not particularly different than the freedom one is given in picking a norm. In fact,
it is often convenient to combine these two choices, using a distribution that is invariant with respect to a given norm. Typical choices that emphasize invariance are the uniform (on a sphere) or Gaussian distributions, and the Bombieri-Weyl inner product when dealing with homogeneous multivariate  polynomials~\cite[16.1]{BC2013}. Technically speaking, the distribution is on the {\em space of perturbations}, rather than the space of inputs.
\end{remark}

If $f$ is differentiable at $D$ and $\Input$ is finite dimensional,
 then it was observed by Armentano~\cite{armentano2010stochastic} that the stochastic condition number can be related to the worst-case one. We illustrate this relation in a simple but instructive special case. Consider the  setting\footnote{Armentano's results apply to differentiable maps between Riemannian manifolds, and cover the moments of the directional derivative as well: they are stronger and are derived with a more comprehensive approach.} 
 where $f\colon \R^m\to \R^n$ ($m\geq n$)
is differentiable at $D\in \R^m$, so that $\kappa$ is the operator norm of the differential.
If $\sigma_1\geq \cdots \geq \sigma_m$ denote the singular values of $\diff{f}(D)$ (with $\sigma_i=0$ for $i>n$), then $\kappa = \sigma_1$. 
If $\mathcal{D}$ is the uniform distribution on the sphere, then
\begin{equation}\label{eq:armentano-bound}
  \frac{1}{m}\kappa \stackrel{(a)}{\leq} \sigma_1 \Expect_{E\sim D}|E_1| \leq \Expect_{E\sim D}\left[\sqrt{\sum_{i}\sigma_i^2E_i^2}\right] \stackrel{(b)}{=} \Expect_{E\sim D} [\|\diff{f}(D)E\|_2] = \kappa_s,
\end{equation}
where for (a) we used the fact that 
\begin{equation*}
\Expect_{E\sim \mathcal{D}}|E_1| = \frac{1}{m}\Expect_{E\sim \mathcal{D}}\|E\|_1\geq \frac{1}{m}\Expect_{E\sim \mathcal{D}} \|E\|_2 = \frac{1}{m}
\end{equation*}
and for (b) we used the orthogonal invariance of the uniform distribution on the sphere.
As we will see in the case of singular polynomial eigenvalue problems with complex perturbations, the bound~\eqref{eq:armentano-bound} does not hold in general, as 
the condition number can be infinite while the stochastic condition number is bounded.
However, sometimes it can happen that the stochastic condition number is also infinite, because the ``directional sensitivity'' (see Definition~\ref{def:ds}) is not an integrable function. For example, for the problem of computing the eigenvalue of the singular pencil $L(x)$ in the introduction, in spite of the fact that real perturbations are analytic for all but a proper Zariski closed set of perturbations~\cite{dd09}, when restricting to real perturbations, we get
$$\kappa_s = \kappa = \infty.$$
Despite this, QZ computes the eigenvalue $1$ with $16$ digits of accuracy.

To remedy the shortcomings of the stochastic condition number as defined in~\ref{def:armentano}, we propose a change in focus from the expected value to tail bounds and quantiles, and the key concept for that purpose is the directional sensitivity.
Just as the classical worst-case condition corresponds to the norm of the derivative, the directional sensitivity corresponds to a directional derivative. And, just as a function can have some, or all, directional derivatives while still not being continuous, a computational problem can have well-defined directional sensitivities but have infinite condition number.

\begin{definition}[Directional sensitivity]\label{def:ds}
The \emph{directional sensitivity} of the computational problem $f$ at the input $D$ with respect to the perturbation $E$ is
$$\sigma_E=\lim_{\epsilon \rightarrow 0} \frac{\|f(D+\epsilon E)- f(D)\|}{\epsilon \|E\|}.$$
\end{definition}

The directional sensitivity takes values in $[0,\infty]$.
In numerical analytic language, the directional sensitivity is the limit, for a particular direction of the backward error, of the ratio of forward and backward errors of the computational problem $f$; this limit is taken letting the backward error tend to zero (again having fixed its direction), which could also be thought of as letting the unit round-off tend to zero. See, e.g., \cite[\S 1.5]{Higham1996} for more details on this terminology.

The directional sensitivity is, if it is finite, $\|E\|^{-1}$ times the norm of the G\^ateaux derivative $\diff{f}(D;E)$ of $f$ at $D$ in direction $E$. If $f$ is Fr\'echet differentiable, then the G\^ateaux derivative agrees with the Fr\'echet derivative, and we get
\begin{equation}\label{eq:worst-case-direction}
  \kappa = \sup_{\|E\| \leq 1} \sigma_E.
\end{equation}

If $E$ is a $\mathcal{V}$-valued random variable satisfying the conditions of Definition~\ref{def:armentano} and
if $f$ is G\^ateaux differentiable in almost all directions, then by the Fatou-Lebesgue Theorem we get
\begin{equation*}
  \kappa_s = \Expect[\sigma_E].
\end{equation*}

When integrating, null sets can be safely ignored; however, depending on the exact nature of the divergence (or lack thereof) of the integrand when approaching those null sets, the value of the integral need not be finite. To overcome this problem and still give probabilistically meaningful statements, we propose to use instead the concept of \emph{numerical null sets}, i.e., sets of finite but small (in a sense that can be made precise depending on, for example, the unit roundoff of the number system of choice, the confidence level required by the user, etc.) measure. This is analogous to the idea that the ``numerical zero'' is the unit roundoff. 
We next define our main characters, two classes of weak condition numbers which generalize, respectively, the classical worst-case and stochastic condition numbers. 

In the following, we fix a probability space $(\Omega,\Sigma,\mathbb{P})$ and a random variable $E\colon \Omega\to \mathcal{V}$, where we consider $\mathcal{V}$ endowed with the Borel $\sigma$-algebra. We further assume that
\begin{equation*}
 \Expect[E] = \int_{\Omega} E(\omega) \ \diff{\Prob}(\omega) = 0, \quad \Expect[\|E\|^2] = \int_{\Omega} \|E(\omega)\|^2 \ \diff{\Prob}(\omega) = 1.
\end{equation*}
The following definitions assume that $\sigma_E$ is $\Prob$-measurable. This is the case, for example, if $f$ is measurable and the directional (G\^ateaux) derivative $\mathrm{d}f(D;E(\omega))$ exists $\Prob$-a.e.

\begin{definition}[Weak worst-case and weak stochastic condition number]\label{def:wwc}
Let $0\leq \delta <1$ and assume that $\sigma_E$ is $\Prob$-measurable. 
The $\delta$-weak worst-case condition number and the $\delta$-weak stochastic condition number are defined as
\begin{equation*}
\kappa_w(\delta) := \inf \{y \in \R \colon \Prob\{ \sigma_E < y \} \geq 1-\delta\}, \quad \quad \kappa_{ws}(\delta) := \Expect[\sigma_E \ | \ \sigma_E \leq \kappa_w(\delta)].
\end{equation*}
\end{definition}

\begin{remark} We note that one can give a definition of the weak worst-case and weak stochastic condition number that does not require $\sigma_E$ to be a random variable, by setting
\begin{equation*}
\kappa_w(\delta) = \inf_{\substack{\SS \in \Sigma,\\ |\SS| \geq 1-\delta}} \sup_{\omega \in \SS} \sigma_{E(\omega)}, \quad \quad \quad\kappa_{ws}(\delta) = \inf_{\substack{\SS \in \Sigma,\\ |\SS| \geq 1-\delta}} \Expect[\sigma_E \ | \ \mathcal{S}],
\end{equation*}
where we used the notation $|\mathcal{S}|=\mathbb{P}(\mathcal{S})$ for the measure of a set if there is no ambiguity. This form is reminiscent of the definition of weak average-case analysis in~\cite{amelunxen2017average}, and when $\sigma_E$ is a random variable it can be shown to be equivalent to \ref{def:wwc}. Moreover, this slightly more general definition better illustrates the essence of the weak condition numbers: these are the (worst-case and average-case) condition numbers that ensue when one is allowed to discard a ``numerically invisible'' subset from the set of perturbations.
\end{remark}

The directional sensitivity has an interpretation as (the limit of) a ratio of forward and backward errors, and hence the new approach provides a potentially useful general framework to give probabilistic bounds on the forward accuracy of outputs of numerically stable algorithms. Moreover, as we will discuss in Section~\ref{sec:compweak}, upper bounds on the weak condition numbers can be computed in practice for a natural distribution. One can therefore see $\delta$ as a parameter representing the confidence level that a user wants for the output, and any computable upper bound on $\kappa_w$ becomes a practical reliability measure on the output, valid with probability $1-\delta$. Although of course round-off errors are not really random variables, we hope that modelling them as such can become, with this ``weak theory'', a useful tool for numerical analysis problems whose traditional condition number is infinite. 

\section{Eigenvalues of matrix polynomials and their directional sensitivity}\label{sec:eigencond}
Algebraically, the spectral theory of matrix polynomials is most naturally described over an algebraically closed field; however, the theory of condition is analytic in nature and it is sometimes of interest to restrict the coefficients, and their perturbations, to be real. In this section we give a unified treatment of both real and complex matrix polynomials. For conciseness we keep this overview very brief; interested readers can find further details in~\cite{dd10,dd09,Dopico2018,GLR09,Noferini2012} and the references therein.
A matrix polynomial is a matrix $P(x)\in \F[x]^{n\times n}$, where $\F\in \{\C,\R\}$ is a field. Alternatively, we can think of it as an expression
\begin{equation*}
  P(x) = P_0+P_1x+\cdots +P_dx^d,
\end{equation*}
with $P_i\in \F^{n\times n}$. If we require $P_d \neq 0$, 
then the integer $d$ in such an expression is called the {\em degree} of the matrix polynomial \footnote{By convention, the zero matrix polynomial has degree $-\infty$.}. We denote the vector space of matrix polynomials over $\F$ of degree at most $d$ by $\F^{n\times n}_d[x]$. A matrix polynomial is called {\em singular} if $\det P(x)\equiv 0$ and otherwise regular.
An element $\lambda \in \C$ is said to be a finite eigenvalue of $P(x)$ if
\begin{equation*}
  \rank_{\C}(P(\lambda)) < \rank_{\F(x)}(P(x)) =: r,
\end{equation*}
where $\F(x)$ is the field of fractions of $\F[x]$, that is, the field of rational functions with coefficients in $\F$. We assume throughout rank $r\geq 1$ (which implies $n\geq 1$) and degree $d\geq 1$. 
The \emph{geometric multiplicity} of the eigenvalue $\lambda$ is the amount by which the rank decreases in the above definition,
$$g_{\lambda} =  r- \rank_{\C}(P(\lambda)).$$

There exist matrices $U, V \in \F[x]^{n\times n}$ with $\det(U)\in \F\backslash\{0\}$, $\det(V)\in \F\backslash\{0\}$, that transform $P(x)$ into its {\em Smith canonical form},
\begin{equation}\label{eq:smith}
 U^*P(x)V=D:=\diag(h_1(x),\dots,h_r(x),0,\dots,0),
\end{equation}
where the \emph{invariant factors} $h_i(x)\in \F[x]$ are non-zero monic polynomials such that $h_i(x)|h_{i+1}(x)$ for $i\in \{1,\dots,r-1\}$. If one has the factorizations $h_{i}=(x-\lambda)^{k_i}\tilde{h}_{i}(x)$ for some $\tilde{h}_i(x)\in \C[x]$, with $0\leq k_i\leq k_{i+1}$ for $i\in \{1,\dots,r-1\}$ and $(x-\lambda)$ not dividing any of the $\tilde{h}_i(x)$, then the $k_i$
are called the \emph{partial multiplicities} of the eigenvalue $\lambda$. The \emph{algebraic multiplicity} $a_\lambda$ is the sum of the partial multiplicities. Note that an immediate consequence of this definition is $a_\lambda \geq g_\lambda$. If $a_\lambda=g_\lambda$ (i.e., all non-zero $k_i$ equal to $1$) then the eigenvalue $\lambda$ is said to be \emph{semisimple}, otherwise it is \emph{defective}. If $a_\lambda=1$ (i.e., $k_i=1$ for $i=r$ and zero otherwise), then we say that $\lambda$ is \emph{simple}, otherwise it is \emph{multiple}.
 
A square matrix polynomial is \emph{regular} if $r=n$, i.e., if $\det P(x)$ is not identically zero. A finite eigenvalue of a regular matrix polynomial is simply a root of the characteristic equation $\det P(x)=0$, and its algebraic multiplicity is equal to the multiplicity of the corresponding root. If a matrix polynomial is not regular it is said to be \emph{singular}. More generally, a finite eigenvalue of a matrix polynomial (resp. its algebraic multiplicity) is a root (resp. the multiplicity as a root) of the equation $\gamma_r(x)=0$, where $\gamma_r(x)$ is the monic greatest common divisor of all the minors of $P(x)$ of order $r$ (note that $\gamma_n(x)=\det P(x)$). 

\begin{remark}
The concept of an eigenvalue, and the other definitions recalled here, are valid also in the more general setting of rectangular matrix polynomials. However, in that scenario a generic matrix polynomial has no eigenvalues \cite{dd09}; as a consequence, a perturbation of a matrix polynomial with an eigenvalue would almost surely remove it. This is a fairly different setting than in the square case, and a deeper probabilistic analysis of the rectangular case beyond the scope of the present paper.

We mention in passing that there are possible ways to extend the analysis to the rectangular case, such as embedding them in a larger square matrix polynomial or (at least in the case of pencils, or linear matrix polynomials) consider structured perturbations that do preserve eigenvalues. 
\end{remark}

\subsection{Eigenvectors}\label{sub:ev} To define the eigenvectors, let $\{b_1(x),\dots,b_{n-r}(x)\}$ and $\{c_1(x),\dots,c_{n-r}(x)\}$ be minimal bases~\cite{Dopico2018,Forney1975,Noferini2012} of $\ker P(x)$ and $\ker P(x)^*$ (as vector spaces over $\F(x)$), respectively. For $\lambda\in \C$, it is not hard to see \cite{Dopico2018,Noferini2012} that
 $\ker_\lambda P(x):=\mathrm{span}\{b_1(\lambda),\dots,b_{n-r}(\lambda)\}$ and $\ker_\lambda P(x)^*:=\mathrm{span}\{c_1(\lambda^*),\dots,c_{n-r}(\lambda^*)\}$ 
 are vector spaces over $\C$ of dimension $n-r$. 
 
Note that $\ker_\lambda P(x) \subseteq \ker P(\lambda)$ and
$\ker_\lambda P(x)^*\subseteq \ker P(\lambda)^*$ for $\lambda\in \C$, and that the difference in dimension is the geometric multiplicity, $\ker P(\lambda)-\ker_\lambda P(x) = \ker P(\lambda)^*- \ker_\lambda P(x)^* = g_\lambda$.
A {\em right eigenvector} corresponding to an eigenvalue $\lambda\in \C$ is defined \cite[Sec. 2.3]{Dopico2018} to be a nonzero element of the quotient space $\ker P(\lambda)/\ker_\lambda P(x)$. 
A {\em left eigenvector} is similarly defined as an element of $\ker P(\lambda)^*/\ker_\lambda P(x)^*$. In terms of the Smith canonical form~\eqref{eq:smith},
the last $n-r$ columns of $U$, evaluated at $\lambda^*$, represent a basis of $\ker_\lambda P(x)^*$, while the last $(n-r)$ columns of $V$, evaluated at $\lambda$, represent a basis of $\ker_\lambda P(x)$. 

In the analysis we will be concerned with a quantity of the form $|u^*P'(\lambda) v|$, where $u,v$ are representatives of eigenvectors. It is known \cite[Lemma 2.9]{Dopico2018} that $b \in \ker_\lambda P(x)$ is equivalent to the existence of a polynomial vector $b(x)$ such that $b(\lambda)=b$ and $P(x)b(x)=0$. Then,
\begin{equation*}
  0 = \frac{\mathrm{d}}{\diff{x}}P(x)b(x)|_{x=\lambda} = P'(\lambda) b(\lambda)+ P(\lambda)b'(\lambda)
\end{equation*}
implies that for any representative of a left eigenvector $u\in \ker P(\lambda)^*$ we get $u^*P'(\lambda)b(\lambda)=0$. 
It follows that for an eigenvalue representative $v$, $u^*P'(\lambda)v$ depends only the component of $v$ orthogonal to $\ker_\lambda P(x)$, and an analogous argument also shows that this expression only depends on the component of $u$ orthogonal to $\ker_\lambda P(x)^*$. 
In practice, we will therefore choose representatives $u$ and $v$ for the left and right eigenvalues that are orthogonal to $\ker_{\lambda} P(x)^*$ and $\ker_\lambda P(x)$, respectively, and have unit norm. If $P(x)\in \F^{n\times n}_d[x]$ is a matrix polynomial with simple eigenvalue $\lambda$, then there is a unique (up to sign) way of choosing such representatives $u$ and $v$. 

\subsection{Perturbations of singular matrix polynomials: the De Ter\'{a}n-Dopico formula}\label{sec:deteran-dopico}
Assume that $P(x)\in \F^{n\times n}_d[x]$, where $\F \in \{ \R, \C\}$, is a matrix polynomial of rank $r\leq n$, and let $\lambda$ be a simple eigenvalue.
Let $X=[U\ u]\in \C^{n\times (n-r+1)}$ be a matrix whose columns form a basis of $\ker P(\lambda)^*$, and such that the columns of $U\in \C^{n\times (n-r)}$ form a basis of $\ker_\lambda P(x)^*$.
Likewise, let $Y=[V \ v]$ 
be a matrix whose columns form a basis of $\ker P(\lambda)$, and such that the columns of $V\in \C^{n\times (n-r)}$ form a basis of $\ker_\lambda P(x)$. In particular, $v$ and $u$ are representatives of, respectively, right and left eigenvectors of $P(x)$. 
The following explicit characterization of a simple eigenvalue is due to De Ter\'an and Dopico~\cite[Theorem 2 and Eqn. (20)]{dd10}. To avoid making a case distinction for the regular case $r=n$, we agree that $\det(U^*E(\lambda)V)=1$ if $U$ and $V$ are empty.

\begin{theorem}\label{thm:dd-2}
Let $P(x)\in \F^{n\times n}_d[x]$ be matrix polynomial of rank $r$ with simple eigenvalue $\lambda$ and $X,Y$ as above. Let $E(x)\in \F^{n\times n}_d[x]$ be such that $X^*E(\lambda)Y$ is non-singular. Then for small enough $\epsilon>0$, the perturbed matrix polynomial $P(x)+\epsilon E(x)$ has exactly one eigenvalue $\lambda(\epsilon)$ of the form
\begin{equation}\label{eq:perturbed-eigen}
\lambda(\epsilon) = \lambda-\frac{\det(X^*E(\lambda)Y)}{u^*P'(\lambda)v \cdot \det(U^*E(\lambda)V)}\epsilon + O(\epsilon^2).
\end{equation}
\end{theorem}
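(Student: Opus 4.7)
The plan is a local block-triangularization followed by a first-order expansion of a reduced scalar equation of size $n-r+1$. Extend $X$ and $Y$ to unitary matrices $\tilde X=[X\ X']$ and $\tilde Y=[Y\ Y']$ and set
\begin{equation*}
M(x,\epsilon):=\tilde X^*\bigl(P(x)+\epsilon E(x)\bigr)\tilde Y=\begin{bmatrix} M_{11}(x,\epsilon) & M_{12}(x,\epsilon) \\ M_{21}(x,\epsilon) & M_{22}(x,\epsilon)\end{bmatrix},
\end{equation*}
with $M_{11}$ of size $(n-r+1)\times(n-r+1)$, matching the dimension of $\ker P(\lambda)$ (which equals $n-r+1$ because $\lambda$ being simple forces $g_\lambda=1$). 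Since $X^*P(\lambda)=0$ and $P(\lambda)Y=0$, the three blocks $M_{11}(\lambda,0)$, $M_{12}(\lambda,0)$, $M_{21}(\lambda,0)$ vanish while $M_{22}(\lambda,0)$ is invertible, being a rank-$(r-1)$ restriction of $P(\lambda)$ to the orthogonal complements of its kernels. The unitarity of $\tilde X,\tilde Y$ makes $\det M(x,\epsilon)$ equal to $\det(P(x)+\epsilon E(x))$ up to a unimodular constant, and the Schur-complement factorisation $\det M=\det M_{22}\cdot\det S$ with $S=M_{11}-M_{12}M_{22}^{-1}M_{21}$ reduces the search for eigenvalues of $P(x)+\epsilon E(x)$ near $\lambda$ to solving $\det S(x,\epsilon)=0$.

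Next I would expand $S$ to first order in $(x-\lambda,\epsilon)$. Because $M_{12}$ and $M_{21}$ each vanish at $(\lambda,0)$, one has $S=M_{11}+O\bigl((\epsilon+|x-\lambda|)^2\bigr)$. Writing $X=[U\ u]$ and $Y=[V\ v]$ as in the statement, the observation recalled in Section~\ref{sub:ev}---namely that every element of $\ker_\lambda P(x)$ (respectively $\ker_\lambda P(x)^*$) is the value at $\lambda$ of a polynomial vector in the rational kernel of $P(x)$ (respectively $P(x)^*$)---yields $U^*P'(\lambda)V=0$, $U^*P'(\lambda)v=0$, and $u^*P'(\lambda)V=0$. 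Setting $\alpha:=u^*P'(\lambda)v$ and $k:=n-r+1$, this means $X^*P'(\lambda)Y=\alpha\,e_k e_k^*$, and therefore
\begin{equation*}
M_{11}(x,\epsilon)=\epsilon\,X^*E(\lambda)Y + (x-\lambda)\,\alpha\, e_k e_k^* + O\bigl((\epsilon+|x-\lambda|)^2\bigr).
\end{equation*}

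The rescaling $x-\lambda=\epsilon\mu$ now turns $\det S(x,\epsilon)=0$ into $\det\bigl(X^*E(\lambda)Y+\mu\alpha\,e_k e_k^*\bigr)+O(\epsilon)=0$, after factoring out the common $\epsilon^{n-r+1}$, and since $\mu\alpha\,e_k e_k^*$ has rank one, linearity of the determinant along the last row gives
\begin{equation*}
\det\bigl(X^*E(\lambda)Y+\mu\alpha\,e_k e_k^*\bigr)=\det\bigl(X^*E(\lambda)Y\bigr)+\mu\alpha\det\bigl(U^*E(\lambda)V\bigr).
\end{equation*}
Solving the leading-order linear equation in $\mu$ recovers the claimed formula~\eqref{eq:perturbed-eigen}. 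The main technical obstacle is upgrading this formal leading-order analysis into an honest perturbation branch: one has to verify that, under the nondegeneracy hypothesis $\det(X^*E(\lambda)Y)\neq 0$, the Newton--Puiseux polygon of $\det S(\lambda+t,\epsilon)$, viewed as a polynomial in $t$ with coefficients analytic in $\epsilon$, carries a single edge of slope $1$, producing exactly one analytic branch $\lambda(\epsilon)=\lambda+\epsilon\mu+O(\epsilon^2)$ of the perturbed eigenvalue. Once this uniqueness is in place, reading off $\mu$ from the displayed identity delivers the stated expansion.
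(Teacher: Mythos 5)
The paper states Theorem~\ref{thm:dd-2} without proof, citing De Ter\'an and Dopico~\cite{dd10}, so there is no internal argument to compare against. Your Schur-complement strategy is a natural way to reconstruct the result: the unitary partition, the identity $X^*P'(\lambda)Y=u^*P'(\lambda)v\,e_ke_k^*$ with $k=n-r+1$, the estimate $S=M_{11}+O\bigl((\epsilon+|x-\lambda|)^2\bigr)$, and the rank-one determinant expansion after the rescaling $x-\lambda=\epsilon\mu$ all hold and lead directly to the claimed first-order coefficient.

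The Newton--Puiseux step that you flag as the remaining obstacle is, however, not a mere formality, and the claim as you phrase it is not correct. The hypothesis $\det(X^*E(\lambda)Y)\neq 0$ only pins down the vertex $(0,k)$ of the Newton polygon of $\det S(\lambda+t,\epsilon)$ (coefficient $c_{0,k}=\det(X^*E(\lambda)Y)$); the other endpoint of a slope-one edge, $(1,k-1)$, has coefficient $c_{1,k-1}=u^*P'(\lambda)v\cdot\det(U^*E(\lambda)V)$, and $\det(U^*E(\lambda)V)\neq 0$ must be assumed as well --- otherwise the lower boundary of the polygon can pass through $(m,k-1)$ with $m>1$, producing $m$ Puiseux branches $t\sim\epsilon^{1/m}$ rather than a single analytic one. (The theorem statement tacitly assumes this, since $\det(U^*E(\lambda)V)$ appears in the denominator of the displayed formula.) You also need to rule out branches $t\sim\epsilon^{s}$ with $0<s<1$, i.e., to show $c_{ij}=0$ for all $j<k-1$; this follows because $S(\lambda+t,0)$ has rank at most one, as $\rank P(\lambda+t)\leq r$ while $M_{22}(\lambda+t,0)$ is an invertible $(r-1)\times(r-1)$ block, so in the multilinear column expansion of $\det S$ at most one column can avoid a factor of $\epsilon$. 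With $c_{0,k}\neq 0$, $c_{1,k-1}\neq 0$, and $c_{ij}=0$ for $j<k-1$ in hand, you can in fact sidestep the general Newton--Puiseux machinery: divide $\det S(\lambda+t,\epsilon)$ by $\epsilon^{k-1}$ to obtain an analytic $G(t,\epsilon)$ with $G(t,0)=c_{1,k-1}t+O(t^2)$, then the implicit function theorem gives the analytic branch with slope $-c_{0,k}/c_{1,k-1}$, and Rouch\'e's theorem on a small fixed circle shows it is the only zero of $G(\cdot,\epsilon)$ there, which is exactly the asserted uniqueness.
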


Note that in the special case $r=n$ we recover the expression for regular matrix polynomials from~\cite[Theorem 5]{tisseur2000backward} and~\cite[Corollary 1]{dd10},
\begin{equation}\label{eq:fran}
 \lambda(\epsilon) = \lambda-\frac{u^*E(\lambda)v}{u^*P'(\lambda)v}\epsilon + O(\epsilon^2),
\end{equation}
where $u,v$ are left and right eigenvectors corresponding to the eigenvalue $\lambda$. 

\subsection{The directional sensitivity of a singular polynomial eigenproblem}\label{sec:def-input-output}
We can now describe the input-output map that underlies our analysis. By the local nature of our problem, we consider a fixed matrix polynomial $P(x)\in \F^{n\times n}_d[x]$ of rank $r$ with simple eigenvalue $\lambda$, 
and define the input-output function 
\begin{equation*}
  f\colon \F^{n\times n}_d[x]\to \C
\end{equation*}
that maps $P(x)$ to $\lambda$, maps $P(x)+\epsilon E(x)$ to $\lambda(\epsilon)$ for any $E(x)$ and $\epsilon>0$ satisfying the conditions of Theorem~\ref{thm:dd-2}, and maps any other matrix polynomial to an arbitrary number other than $\lambda$.

An immediate consequence of Theorem~\ref{thm:dd-2} and our definition of the input-output map is an explicit expression for the directional sensitivity of the problem. Here we write $\|E\|$ for the Euclidean norm of the vector of coefficients of $E(x)$ as a vector in $\F^{n^2(d+1)}$. From now on, when talking about the ``directional sensitivity of an eigenvalue in direction $E$'', we implicitly refer to the input-output map $f$ defined above.

\begin{corollary}\label{cor:directional}
Let $\lambda$ be a simple eigenvalue of $P(x)$ and let $E(x)\in \F^{n\times n}_d[x]$ be a regular matrix polynomial. Then the directional sensitivity of the eigenvalue $\lambda$ in direction $E(x)$ is 
\begin{equation}\label{eq:directional-sensitivity}
  \sigma_E = \frac{1}{\|E\|}\left|\frac{\det(X^*E(\lambda)Y)}{u^*P'(\lambda)v \cdot \det(U^*E(\lambda)V)}\right|.
\end{equation}
In the special case $r=n$, we have
 \begin{equation}\label{eq:directional-sensitivity-regular}
  \sigma_E = \frac{1}{\|E\|}\left|\frac{u^*E(\lambda) v}{u^*P'(\lambda)v}\right|.
\end{equation}
\end{corollary}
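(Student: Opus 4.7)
The plan is a direct substitution: apply Definition~\ref{def:ds} to the input-output map $f$ from Section~\ref{sec:def-input-output} and read off the result from Theorem~\ref{thm:dd-2}. First, I would recall that by construction $f(P)=\lambda$, and Theorem~\ref{thm:dd-2} guarantees that for all sufficiently small $\epsilon>0$ the perturbed polynomial $P(x)+\epsilon E(x)$ has exactly one eigenvalue $\lambda(\epsilon)$ near $\lambda$, so $f(P+\epsilon E)=\lambda(\epsilon)$. Consequently, Definition~\ref{def:ds} gives
$$\sigma_E \;=\; \lim_{\epsilon \to 0}\,\frac{|\lambda(\epsilon)-\lambda|}{\epsilon\,\|E\|}.$$

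Next, I would substitute the first-order expansion~\eqref{eq:perturbed-eigen} of Theorem~\ref{thm:dd-2} into this limit. Since that expansion is linear in $\epsilon$ with an $O(\epsilon^2)$ remainder, dividing by $\epsilon\|E\|$ and letting $\epsilon\to 0$ leaves exactly
$$\sigma_E \;=\; \frac{1}{\|E\|}\left|\frac{\det(X^*E(\lambda)Y)}{u^*P'(\lambda)v\cdot \det(U^*E(\lambda)V)}\right|,$$
which is~\eqref{eq:directional-sensitivity}. The regular case~\eqref{eq:directional-sensitivity-regular} is obtained in exactly the same way from~\eqref{eq:fran}; alternatively, one can deduce it directly from~\eqref{eq:directional-sensitivity} using the convention that $\det(U^*E(\lambda)V)=1$ when $U,V$ are empty, whereby $\det(X^*E(\lambda)Y)$ collapses to the scalar $u^*E(\lambda)v$.

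No substantial obstacle is expected: the heavy lifting is done by Theorem~\ref{thm:dd-2}, and the corollary is essentially a rewriting of that expansion in the language of directional sensitivity. The one ingredient worth noting explicitly is that, in order to invoke Theorem~\ref{thm:dd-2}, one needs $X^*E(\lambda)Y$ to be non-singular; under the regularity assumption on $E(x)$ this holds generically, and the exceptional set of perturbations where it fails is precisely the object that the probabilistic analysis in Section~\ref{sec:probanal} is designed to measure.
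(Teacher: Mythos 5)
Your proof is correct and is exactly what the paper has in mind: the paper gives no separate argument, stating only that the corollary is ``an immediate consequence of Theorem~\ref{thm:dd-2} and our definition of the input-output map,'' and your write-up spells out precisely that substitution. Your closing remark correctly flags that the hypothesis actually invoked is nonsingularity of $X^*E(\lambda)Y$ rather than regularity of $E(x)$ per se (the two are not equivalent), which is the genericity gap the probabilistic analysis of Section~\ref{sec:probanal} quantifies.
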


For the goals in this paper, these results suffice. However, we note that it is possible to obtain equivalent formulae for the expansion that, unlike the one by De Ter\'{a}n and Dopico, do not involve the eigenvectors of singular polynomials.

Finally, we introduce a parameter that will enter all of our results, and coincides with the inverse of the worst-case condition number in the regular case $r=n$. Choose representatives $u,v$ of the eigenvectors that satisfy $\|u\|=\|v\|=1$ and (if $r<n$) $U^*u=V^*v=0$.
For such a choice of eigenvectors, define
\begin{equation}\label{eq:cdef}
  \gamma_P := |u^*P'(\lambda) v| \cdot \left(\sum_{j=0}^d |\lambda|^{2j}\right)^{-1/2}.
\end{equation}

We conclude with the following variation of~\cite[Theorem 5]{tisseur2000backward}. For a proof of the following result, see~\cite[Lemma 2.1]{adhikari2011structured} or~\cite{alam2019sensitivity} for a discussion in a wider context.

\begin{proposition}\label{prop:cond-regular}
Let $P(x)\in \F^{n\times n}_d[x]$ be a regular matrix polynomial and $\lambda\in \C$ a simple eigenvalue. Then the worst-case condition number of the problem of computing $\lambda$ is $\kappa = \gamma_P^{-1}$.
\end{proposition}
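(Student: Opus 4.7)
The plan is to compute the worst-case condition number directly from its reformulation as a supremum of directional sensitivities, using the explicit formula for $\sigma_E$ in the regular case. By equation~\eqref{eq:worst-case-direction} together with the scale invariance of $\sigma_E$ (replacing $E$ by $cE$ for any $c\neq 0$ leaves $\sigma_E$ unchanged), one has $\kappa = \sup_{\|E\|=1} \sigma_E$, and by~\eqref{eq:directional-sensitivity-regular} this equals
\begin{equation*}
\kappa = \frac{1}{|u^*P'(\lambda)v|} \sup_{\|E\|=1} |u^*E(\lambda) v|.
\end{equation*}
Since the quantity $|u^*P'(\lambda)v|$ does not depend on $E$, everything reduces to maximizing the linear functional $E \mapsto u^*E(\lambda) v$ over the unit ball of $\F^{n\times n}_d[x]$.

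First, I would prove the upper bound $\kappa \leq \gamma_P^{-1}$. Writing $E(x) = \sum_{j=0}^d E_j x^j$, we have $u^*E(\lambda) v = \sum_{j=0}^d \lambda^j\, u^*E_j v$. Using $\|u\| = \|v\| = 1$, the submultiplicativity of the Frobenius norm gives $|u^*E_j v| \leq \|E_j\|_F$, and then Cauchy--Schwarz applied to the sequences $(|\lambda|^j)_{j=0}^d$ and $(\|E_j\|_F)_{j=0}^d$ yields
\begin{equation*}
|u^*E(\lambda) v| \leq \sum_{j=0}^d |\lambda|^j \|E_j\|_F \leq \Bigl(\sum_{j=0}^d |\lambda|^{2j}\Bigr)^{1/2} \Bigl(\sum_{j=0}^d \|E_j\|_F^2\Bigr)^{1/2} = \Bigl(\sum_{j=0}^d |\lambda|^{2j}\Bigr)^{1/2} \|E\|.
\end{equation*}
Dividing by $|u^*P'(\lambda) v|$ gives the upper bound.

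Next, I would exhibit an $E$ achieving equality. Both Cauchy--Schwarz steps are tight precisely for $E_j = c\,\bar\lambda^j\, uv^*$, with $c$ chosen to enforce $\|E\|=1$; since $\|uv^*\|_F = 1$, this forces $|c| = (\sum_{j=0}^d |\lambda|^{2j})^{-1/2}$. A direct check shows that $u^*E(\lambda) v = c \sum_{j=0}^d |\lambda|^{2j}$, so that $|u^*E(\lambda) v|$ attains the upper bound. Over $\F = \C$ this concludes the argument. Over $\F = \R$, the construction is already real if $\lambda$ is real and $u,v$ can be chosen real; if $\lambda$ is complex, the maximizer above is no longer real, and a separate argument is needed. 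The main obstacle is precisely this real-$\F$, complex-$\lambda$ case: one can handle it either by taking real and imaginary parts of the complex maximizer and showing that one of them achieves (or arbitrarily approximates) the same supremum, or simply by appealing to~\cite[Lemma 2.1]{adhikari2011structured} as cited in the text. Everything else is a clean application of Cauchy--Schwarz to the formula from Corollary~\ref{cor:directional}.
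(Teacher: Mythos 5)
The paper does not supply its own proof of Proposition~\ref{prop:cond-regular}; it cites \cite[Lemma~2.1]{adhikari2011structured} and \cite{alam2019sensitivity}, and your argument is exactly the standard one those references would give. For $\F=\C$ it is complete and correct: setting $M_j := \bar\lambda^{j}uv^*$, the map $E\mapsto u^*E(\lambda)v$ is the Frobenius inner product $\langle M,E\rangle$, so its supremum over the unit ball is $\|M\|_F = (\sum_j|\lambda|^{2j})^{1/2}$ (using $\|uv^*\|_F=\|u\|\|v\|=1$), attained at $E=M/\|M\|_F$, which is precisely your choice $E_j=c\bar\lambda^j uv^*$. The same argument settles $\F=\R$ whenever $\lambda\in\R$, since then $u,v$ and hence $M$ can be taken real.

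The remaining case ($\F=\R$, $\lambda\notin\R$) is not merely a technical loose end: the suggested workaround of taking the real or imaginary part of $M$ does not achieve, or even arbitrarily approximate, the supremum. For real $E$ of unit norm, $|\langle M,E\rangle|^2 = \langle M_R,E\rangle^2 + \langle M_I,E\rangle^2$ is a quadratic form whose maximum over the unit sphere is the largest eigenvalue of $\mvec(M_R)\mvec(M_R)^T + \mvec(M_I)\mvec(M_I)^T$; this equals $\|M\|_F^2 = \|M_R\|_F^2+\|M_I\|_F^2$ only when $M_R$ and $M_I$ are proportional, which generically fails. A concrete instance: $n=1$, $d=2$, $P(x)=x^2+1$, $\lambda=i$, gives $\gamma_P^{-1}=\sqrt3/2$, while the supremum of $\sigma_E$ over real unit-norm $E$ is $1/\sqrt2$. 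So with real perturbations and complex $\lambda$ the identity $\kappa=\gamma_P^{-1}$ is generically a strict inequality $\kappa<\gamma_P^{-1}$. Your proof therefore covers precisely the cases in which the stated equality actually holds; for $\F=\R$, $\lambda\notin\R$ you should flag that no completion of the construction can restore equality, rather than leaving it to a ``separate argument'' based on real and imaginary parts.
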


\begin{remark}
In practice, an algorithm such as QZ applied to $P(x)$ will typically compute {\em all} the eigenvalues of a nearby matrix polynomial. Therefore, any conditioning results on the conditioning of our specific input-output map $f$ will explain why the correct eigenvalue is found {\em among} the computed eigenvalues, but not tell us how to choose the right one in practice. For selecting the right eigenvalue one could use heuristics, such as computing the eigenvalues of an artificially perturbed problem.
For more details on these practical considerations, we refer to~\cite{hmp18}.
\end{remark}

\section{Probabilistic analysis of the directional sensitivity}\label{sec:probanal}
In this section we study the probability distribution of the directional sensitivity of a singular polynomial eigenvalue problem
To deal with real and complex perturbations simultaneously as far as possible, we follow the convention from random matrix theory~\cite{edelman2005random} and parametrize our results with a parameter $\beta$, where $\beta=1$ if $\F=\R$ and $\beta=2$ if $\F=\C$. 
We consider perturbations $E(x)=E_0+E_1x+\cdots+E_dx^d$, which we identify with the matrix $E=\begin{bmatrix} E_0 & \cdots & E_d\end{bmatrix}\in \mathbb{F}^{n\times n(d+1)}$  (each $E_i\in \F^{n\times n}$), and denote by $\|E\|$ the Euclidean norm of $E$ considered as a vector in $\F^N$, where $N:=n^2(d+1)$ (equivalently, the Frobenius norm of the matrix $E$).
When we say that $E$ is uniformly distributed on the sphere, written $E\sim \mathcal{U}(\beta N)$ with $\beta=1$ for real perturbations and and $\beta=2$ if $E$ is complex, we mean that the image of $E$ under an identification $\mathbb{F}^{n\times n(d+1)}\cong \R^{\beta N}$ is uniformly distributed on the corresponding unit sphere $S^{\beta N-1}$.
To avoid trivial special cases, we assume that $r\geq 1$ and $d\geq 1$, so that, in particular, $N\geq 2$. 

The following theorem characterizes the distribution of the directional sensitivity under uniform perturbations.

\begin{theorem}\label{thm:main-distr}
Let $P(x)\in \F_d^{n\times n}[x]$ be a matrix polynomial of rank $r$ and let $\lambda$ be a simple eigenvalue of $P(x)$. If $E\sim \uniform(\beta N)$, where $\beta=1$ if $\F=\R$ and $\beta=2$ if $\F=\C$, then the directional sensitivity of $\lambda$ in direction $E(x)$ satisfies
\begin{equation*}
 \Prob\{\sigma_E\geq t\} = \begin{cases} \Prob\{Z_N/Z_{n-r+1} \geq \gamma_P^2 t^2\} & \text{ if } r<n\\
  \Prob\{Z_N \geq \gamma_P^2 t^2\} & \text{ if } r=n
  \end{cases},
\end{equation*}
where $Z_k\sim \Beta(\beta/2,\beta(k-1)/2)$ denotes a beta distributed random variable with parameters $\beta/2$ and $\beta(k-1)/2$, and $Z_N$ and $Z_{n-r+1}$ are independent.
\end{theorem}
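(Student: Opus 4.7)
The plan is to reduce the problem to a distributional calculation about Gaussian random matrices. Since $\sigma_E$ is homogeneous of degree zero in $E$, its distribution is unchanged when $E\sim\uniform(\beta N)$ is replaced by a standard Gaussian $G$ in $\F^N$. Writing $m := n-r+1$ and $\mu := (\sum_{j=0}^d |\lambda|^{2j})^{1/2}$, Corollary~\ref{cor:directional} directly gives
\begin{equation*}
\sigma_G^2 \gamma_P^2 = \frac{|\det \tilde M|^2}{\|G\|^2 \, |\det \tilde D|^2},
\end{equation*}
where $\tilde M := \mu^{-1}X^*G(\lambda)Y$ is $m\times m$ and $\tilde D$ is its leading $(m-1)\times(m-1)$ submatrix.

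The key structural observation is that the $m^2$ block-matrix tensors encoding the entries of $X^*E(\lambda)Y$---specifically, the matrices in $\F^{n\times n(d+1)}$ whose $j$-th block (for $j=0,\dots,d$) equals $\bar\lambda^j$ times one of the rank-one outer products $uv^*$, $U_kv^*$, $uV_\ell^*$, or $U_kV_\ell^*$---all have Frobenius norm exactly $\mu$ and are pairwise orthogonal, as a direct consequence of $u\perp U$, $v\perp V$ and the orthonormality of the columns of $U$ and $V$. Consequently the entries of $\tilde M$ are projections of $G$ onto an orthonormal set and hence form iid standard Gaussians in the appropriate field; furthermore $\|G\|^2 = \|\tilde M\|_F^2 + \|G_\perp\|^2$ with $\|G_\perp\|^2$ independent of $\tilde M$.

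Next I would apply the QR decomposition $\tilde M = QR$ (with $Q$ unitary/orthogonal and $R$ upper triangular with positive real diagonal) together with the Schur complement identity $\det\tilde M/\det\tilde D = 1/(\tilde M^{-1})_{mm}$. Since $R^{-1}$ is upper triangular one gets $(\tilde M^{-1})_{mm} = (R^{-1}Q^*)_{mm} = \bar Q_{mm}/R_{mm}$, whence $|\det\tilde M|^2/|\det\tilde D|^2 = R_{mm}^2/|Q_{mm}|^2$. Classical results for Gaussian matrices then supply: $Q\perp R$; $R_{mm}^2$ equals the squared norm of the last Gram--Schmidt residual (hence $\chi^2$-distributed in the appropriate $\beta$-sense); and $|Q_{mm}|^2\sim\Beta(\beta/2,\beta(m-1)/2) = Z_{n-r+1}$ as the squared magnitude of an entry of a Haar-distributed matrix.

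To finish, a careful accounting of the chi-square contributions shows that $Y := \|G\|^2 - R_{mm}^2$ is independent of $R_{mm}^2$, so the Beta--Gamma identity gives $R_{mm}^2/\|G\|^2 \sim \Beta(\beta/2, \beta(N-1)/2) = Z_N$. The required independence $Z_N\perp Z_{n-r+1}$ follows because $Z_N$ is a function only of $R$ and $G_\perp$, whereas $Z_{n-r+1}$ depends only on $Q$. Combining, $\sigma_G^2 \gamma_P^2 = R_{mm}^2/(|Q_{mm}|^2\|G\|^2) = Z_N/Z_{n-r+1}$ in distribution, which yields the stated tail bound (the regular case $r=n$ corresponding to the degenerate $Z_{n-r+1}\equiv 1$). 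The main technical obstacle I anticipate is the real case $\beta=1$ with a complex eigenvalue $\lambda$: there the entries of $\tilde M$ become complex linear functionals of a real Gaussian and their real and imaginary parts are correlated in a non-trivial way; establishing the required ``real Gaussian matrix'' structure in this regime will likely require exploiting the conjugate eigenvalue symmetry at $\bar\lambda$.
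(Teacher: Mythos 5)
Your proposal follows essentially the same path as the paper's proof: reduce by homogeneity to a Gaussian perturbation, use Cramer's rule to express the ratio of determinants as $1/(\tilde M^{-1})_{mm}$, apply the probabilistic QR decomposition so that this becomes $R_{mm}^2/|Q_{mm}|^2$ with $R_{mm}^2$ a $\chi^2$ variable independent of $|Q_{mm}|^2\sim\Beta(\beta/2,\beta(m-1)/2)$, and account for $\|G\|^2$ via the orthogonal decomposition $\|G\|^2=\|\tilde M\|_F^2+\|G_\perp\|^2$ so that $R_{mm}^2/\|G\|^2$ is a $\Beta(\beta/2,\beta(N-1)/2)$ projection (the paper phrases this via a unitary rotation that maps $p_0$ to the first coordinate, which is the same observation as your orthonormal-tensor claim). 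The subtlety you honestly flag at the end --- that for $\F=\R$ with $\lambda\notin\R$ the entries of $\tilde M$ are complex linear functionals of a real Gaussian whose real and imaginary parts are correlated --- is a genuine concern, but note that the paper's own proof glides over it just as quickly, asserting without further justification that the chosen scaling makes the entries of $E(\lambda)$ independent $\normal^\beta(0,1)$; so your treatment matches the paper's level of rigor here.
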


The proof is given later in this section, after having introduced some preliminary concepts and results.
If $r=n$, then the directional sensitivity is distributed like the square root of a beta random variable, and in particular it is bounded. 
Using the density of the beta distribution, we can derive the moments and tail bounds for the distribution of the directional sensitivity explicitly. 

\begin{corollary}\label{cor:expected}
Let $P(x)\in \F_d^{n\times n}[x]$ be a matrix polynomial of rank $r$ and let $\lambda$ be a simple eigenvalue of $P(x)$. If $E\sim \uniform(\beta N)$, where $\beta=1$ if $\F=\R$ and $\beta=2$ if $\F=\C$, then the expected directional sensitivity of $\lambda$ in direction $E(x)$ is
\begin{equation*}
  \Expect[\sigma_E] = \begin{cases}
  \frac{1}{\gamma_P}\frac{\pi}{2}\frac{\Gamma(N)\Gamma(n-r+1)}{\Gamma(N+1/2)\Gamma(n-r+1/2)} & \text{ if } \F=\C\\
  \infty & \text{ if } \F=\R \text{ and } r<n\\
  \frac{1}{\gamma_P}\frac{\Gamma(N/2)}{\sqrt{\pi}\Gamma((N+1)/2)} &  \text{ if } \F=\R \text{ and } r=n.
  \end{cases}
\end{equation*}
If $t\geq \gamma_P^{-1}$, then for $r<n$ we have the tail bounds
\begin{equation}\label{eq:tailbounds}
  \Prob\{\sigma_E\geq t\} \leq \begin{cases}
  \frac{1}{\gamma_P^2}\frac{n-r}{N} \frac{1}{t^2} & \text{ if } \F=\C\\
  \frac{1}{\gamma_P}\frac{2}{\pi}\frac{\Gamma(N/2)\Gamma((n-r+1)/2)}{\Gamma((N+1)/2)\Gamma((n-r)/2)} \frac{1}{ t} & \text{ if } \F=\R.
  \end{cases}
\end{equation}
If $r=n$, then $\sigma_E\leq \gamma_P$. 
\end{corollary}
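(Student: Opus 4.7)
The plan is to derive every assertion directly from Theorem~\ref{thm:main-distr} by inserting standard moment and tail identities for the beta distribution. The only facts I will need are that if $Z\sim \Beta(a,b)$ then $\Expect[Z^s] = \Beta(a+s,b)/\Beta(a,b)$ whenever $a+s>0$ (and $+\infty$ otherwise), together with the usual identities $\Gamma(k+1)=k\Gamma(k)$, $\Gamma(1/2)=\sqrt{\pi}$, and $\Gamma(0)=+\infty$. Theorem~\ref{thm:main-distr} lets me represent $\sigma_E = \gamma_P^{-1}\sqrt{Z_N/Z_{n-r+1}}$ with $Z_N$ and $Z_{n-r+1}$ independent in the singular case, and $\sigma_E = \gamma_P^{-1}\sqrt{Z_N}$ in the regular case $r=n$; every item of the corollary then follows from these two representations.

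For the expected directional sensitivity, independence gives
\[
\Expect[\sigma_E] = \gamma_P^{-1}\,\Expect[Z_N^{1/2}]\,\Expect[Z_{n-r+1}^{-1/2}]
\]
when $r<n$, and $\Expect[\sigma_E] = \gamma_P^{-1}\Expect[Z_N^{1/2}]$ when $r=n$. In the real singular case ($\beta=1$, $r<n$) the factor $\Expect[Z_{n-r+1}^{-1/2}]$ equals $\Beta(0,(n-r)/2)/\Beta(1/2,(n-r)/2) = +\infty$ because $\Gamma(0)$ diverges, so $\Expect[\sigma_E] = +\infty$. In the remaining cases all moments are finite and routine $\Gamma$-function manipulations reproduce the stated closed forms. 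The deterministic bound for $r=n$ is immediate from $Z_N\in[0,1]$ almost surely, which yields $\sigma_E = \gamma_P^{-1}\sqrt{Z_N}\leq \gamma_P^{-1}$.

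For the tail bound in the singular case I would condition on $Z_N$ to write
\[
\Prob\{\sigma_E\geq t\} = \Expect\bigl[F\bigl(Z_N/(\gamma_P^2 t^2)\bigr)\bigr],
\]
where $F$ is the CDF of $Z_{n-r+1}$; the hypothesis $t\geq \gamma_P^{-1}$ forces $Z_N/(\gamma_P^2 t^2)\in[0,1]$. Bounding the factor $(1-y)^{\beta(n-r)/2-1}$ in the integrand defining $F$ by $1$ gives
\[
F(s)\leq \frac{s^{\beta/2}}{(\beta/2)\,\Beta(\beta/2,\beta(n-r)/2)},
\]
after which taking expectations reduces the problem to computing the single beta moment $\Expect[Z_N^{\beta/2}]$ and simplifying with $\Gamma$-identities to match the stated bounds in each case. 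The main subtlety I anticipate is that this bound on $F$ is only valid when $\beta(n-r)\geq 2$: automatic for $\F=\C$, but failing in the borderline real case $n-r=1$, where $Z_{n-r+1}$ follows the arcsine law and $F(s) = (2/\pi)\arcsin\sqrt{s}$ strictly exceeds $(2/\pi)\sqrt{s}$ on $(0,1]$. I expect to close this gap by a separate estimate based on the inequality $\arcsin(y)\leq (\pi/2)\, y$ on $[0,1]$, with a careful final check that the resulting constant matches the one claimed in~\eqref{eq:tailbounds}.
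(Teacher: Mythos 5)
Your route is essentially the paper's own: the paper also feeds the representation $\sigma_E\stackrel{d}{=}\gamma_P^{-1}(Z_N/Z_{n-r+1})^{1/2}$ from Theorem~\ref{thm:main-distr} into a beta-quotient computation, namely Lemma~\ref{le:betaexp} applied with $k=2$, $a=c=\beta/2$, $b=\beta(N-1)/2$, $d=\beta(n-r)/2$; your factorization $\Expect[\sigma_E]=\gamma_P^{-1}\Expect[Z_N^{1/2}]\,\Expect[Z_{n-r+1}^{-1/2}]$ and your CDF bound obtained by discarding the factor $(1-y)^{\beta(n-r)/2-1}$ are exactly the computations carried out inside that lemma, just packaged differently. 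The expectation claims (including the divergence for $\F=\R$, $r<n$), the deterministic bound for $r=n$ (your $\sigma_E\le\gamma_P^{-1}$ is the correct conclusion; the ``$\sigma_E\le\gamma_P$'' in the statement is a typo, cf.\ Proposition~\ref{prop:cond-regular}), the complex tail bound, and the real tail bound for $n-r\ge 2$ all go through as you describe.

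The genuine gap is the borderline case $\F=\R$, $n-r=1$, and the patch you promise cannot deliver the stated constant. There $Z_{n-r+1}\sim\Beta(1/2,1/2)$ has CDF $F(s)=\tfrac{2}{\pi}\arcsin\sqrt{s}$, so for $t\ge\gamma_P^{-1}$ the tail is exactly $\Prob\{\sigma_E\ge t\}=\tfrac{2}{\pi}\Expect\bigl[\arcsin\bigl(\sqrt{Z_N}/(\gamma_P t)\bigr)\bigr]$, while the right-hand side of~\eqref{eq:tailbounds} with $n-r=1$ equals exactly $\tfrac{2}{\pi}\Expect[\sqrt{Z_N}]/(\gamma_P t)$. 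Since $\arcsin u>u$ on $(0,1]$, the exact tail strictly exceeds the claimed bound, so no ``careful final check'' can make the constants match; your inequality $\arcsin y\le\tfrac{\pi}{2}y$, i.e.\ $F(s)\le\sqrt{s}$ (the best bound of the form $C\sqrt{s}$), proves the real tail bound only with an extra factor $\pi/2$. This is a defect of the statement rather than of your plan: the paper's own argument has the same blind spot, since step (2) in the proof of Lemma~\ref{le:betaexp} uses $(1-t^{-k}xz)^{d-1}\le 1$, which requires $d\ge 1$, i.e.\ $\beta(n-r)\ge 2$. So in your write-up, prove~\eqref{eq:tailbounds} as stated for $\F=\C$ and for $\F=\R$ with $n-r\ge 2$, and for $\F=\R$, $n-r=1$ record the bound weakened by the factor $\pi/2$ (which, after a trivial adjustment of constants, still supports the weak-condition bounds of Theorem~\ref{thm:main-real}); do not assert that the stated constant can be recovered in that case.
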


\begin{proof}
For the expectation, using Theorem~\ref{thm:main-distr} in the case $r<n$, we have
\begin{equation*}
 \Expect[\sigma_E] = \frac{1}{\gamma_P} \int_{0}^\infty \Prob\{(Z_N/Z_{n-r+1})^{1/2}\geq t\} \ \diff{t} = \frac{1}{\gamma_P} \Expect[(X_N/X_{n-r+1})^{1/2}],
\end{equation*}
where $X_k$ denotes a $\Beta(\beta /2,\beta(k-1)/2)$ distributed random variable.
The claimed tail bounds and expected values for $r<n$ follow by applying Lemma~\ref{le:betaexp} with $k=2$, $a=c=\beta/2$, $b=\beta(N-1)/2$, and $d=\beta(n-r)/2$. 
If $r=n$, the expected value follows along the lines, and the deterministic bound follows trivially from the boundedness of the beta distribution.
\end{proof}

\begin{remark}
In the context of random inputs, it is common to study the logarithm of a condition number instead of the condition number itself~\cite{BC2013,Edelman1988}. Thus, even when the expected condition is not finite, the expected logarithm may still be small. Using a standard argument (see, e.g., \cite[Proposition 2.26]{BC2013}) we can deduce a bound on the expected logarithm of the directional sensitivity:
\begin{equation*}
 \Expect[\log \sigma_E] \leq \log\left(\frac{1}{\gamma_P}\frac{2}{\pi}\sqrt{\frac{n-r}{N-1}}\right)+1.
\end{equation*}
The logarithm of the sensitivity is relevant as a measure for the loss of precision.
\end{remark}

As the derivation of the bounds~\eqref{eq:tailbounds} using Lemma~\ref{le:betaexp} shows, the cumulative distribution functions in question can be expressed {\em exactly} in terms of integrals of hypergeometric functions. This way, the tail probabilities can be computed to high accuracy for any given $t$, see also Remark~\ref{rem:spherical-gaussian}. 
However, as the derivation of the tail bounds in Appendix~\ref{sub:betaproof} also shows, the bounds given in Corollary~\ref{cor:expected} are sharp for fixed $t$ and $n-r\to \infty$, as well as for fixed $n-r$ and $t\to \infty$. Figure~\ref{fig:crude-tails} illustrates these bounds for a choice of small parameters ($n=4$, $d=2$, $r=2$, $\gamma_P=1$). 
Moreover, the bounds~\eqref{eq:tailbounds} have the added benefit of being easily interpretable. These tail bounds can be interpreted as saying that for large $n$ and/or $d$, it is highly unlikely that the directional sensitivity will exceed $\gamma_P^{-1}$ (which by Proposition~\ref{prop:cond-regular} is  the worst-case condition bound in the smooth case $r=n$). 

\begin{figure}[hbt]
\centering
\includegraphics[width=1\textwidth]{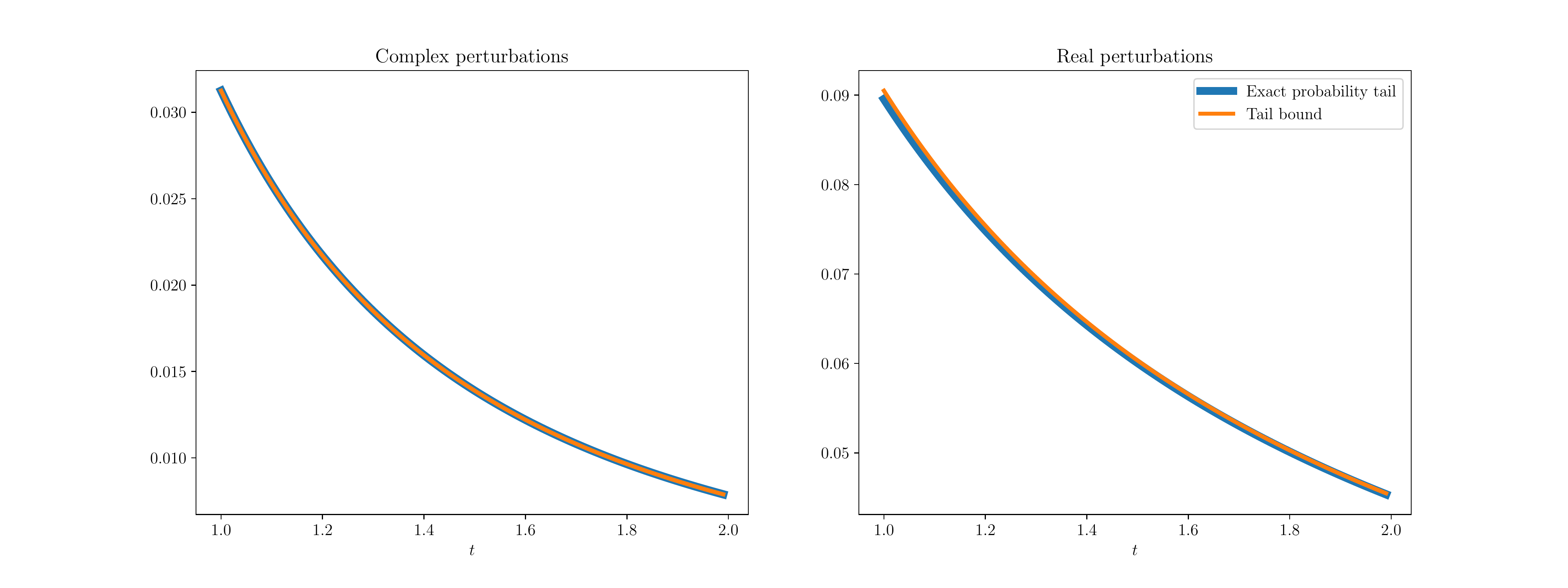}
\caption{The exact distribution tail of $\sigma_E$ and the tail bounds for $n=4$, $d=2$, $r=2$, and $\gamma_P=1$, so that $N=48$.}\label{fig:crude-tails}
\end{figure}

\begin{example}\label{ex:L-calculated}
Consider again the matrix pencil $L(x)$ from~\eqref{eq:pencil-example}. This pencil has rank $3$, and the cokernel and kernel are spanned by the vectors $p(x)$ and $q(x)$, respectively, given by
\begin{equation*}
  p(x) = \begin{bmatrix}
   1\\
  0\\
 -1\\
  1
  \end{bmatrix}, \quad
  q(x) = \begin{bmatrix}
x\\
-2x^2-4x+1\\
x\\
1
  \end{bmatrix}.
\end{equation*}
The matrix polynomial has the simple eigenvalue $\lambda=1$, and the matrix $L(1)$ has rank $2$. 
The cokernel $\ker L(1)^T$ and the kernel $\ker L(1)$ are spanned by the columns of the matrices $X$ and $Y$, given by
\begin{equation*}
 X = \begin{bmatrix}
 0.5774  & -0.7061\\
 0.          & 0.4888 \\
 -0.5774 & -0.4345\\
 0.5774  & 0.2716
\end{bmatrix}, \quad
 Y = \begin{bmatrix}
  0.1924 &-0.6873\\
 -0.9623 & -0.1322\\
  0.1924 & 0.02644\\
  0         & 0.7137
 \end{bmatrix}.
\end{equation*}
Let $u$ be the second column of $X$ and let $v$ be the second coumn of $Y$. The vectors $u$ and $v$ are orthogonal to $\ker_{\lambda} L(x)^T = \mathrm{span}\{p(1)\}$ and $\ker_{\lambda} L(x)=\mathrm{span}\{q(1)\}$ and have unit norm. We therefore have
\begin{equation*}
  \gamma_L := \frac{|u^TL'(1)v|}{\sqrt{2}} = 0.08223
\end{equation*}
Hence, $\gamma_L^{-1}=12.16$. Figure~\ref{fig:L-example} shows the result of comparing the distribution of $\sigma_E$, found empirically, with the bounds obtained in Theorem~\ref{thm:main-distr}. The relative error in the plot is of order $10^{-5}$. 

\begin{figure}[hbt]
\centering
\includegraphics[width=0.6\textwidth]{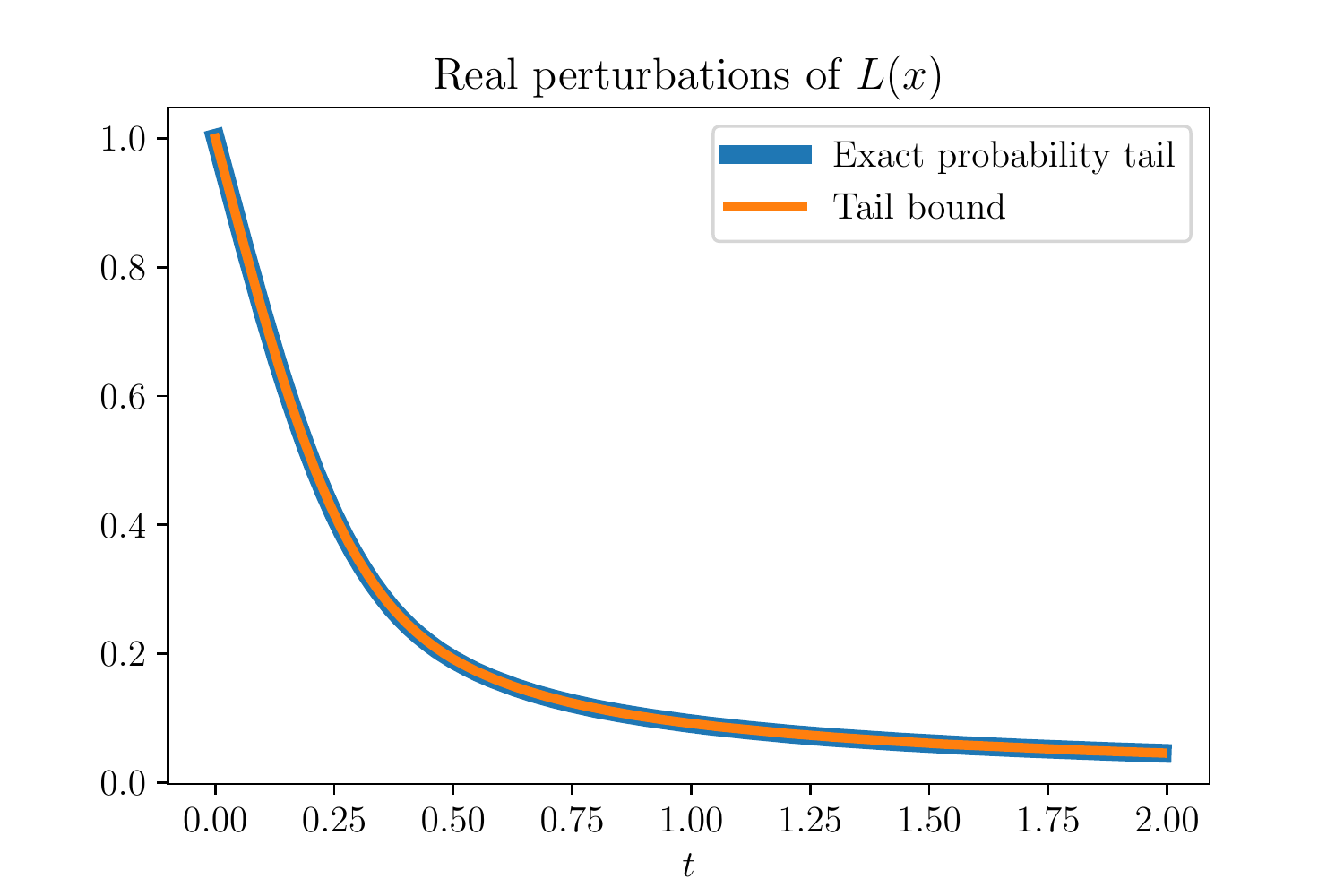}
\caption{The exact distribution tail of $\sigma_E$ for the matrix pencil $L(x)$ from~\eqref{eq:pencil-example}, and the theoretically computed tail bound~\eqref{eq:tailbounds}.}\label{fig:L-example}
\end{figure}
\end{example}

The plan for the rest of this section is as follows. In Section~\ref{sub:probprem} we recall some facts from probability theory and random matrix theory. In Section~\ref{sub:probqr} we discuss the QR decomposition of a random matrix, and in Section~\ref{sub:mainproof} we use this decomposition to prove Theorem~\ref{thm:main-distr}

\subsection{Probabilistic preliminaries}\label{sub:probprem}
We write $g\sim \normal^1(\mu,\Sigma)$ for a normal distributed (Gaussian) random vector $g$ with mean $\mu$ and covariance matrix $\Sigma$, and 
$g\sim \normal^2(\mu,\Sigma)$ for a complex Gaussian vector; this is a $\C^n$-valued random vector with expected value $\mu$, whose real and imaginary parts are independent real Gaussian random vectors with covariance matrix $\Sigma/2$ (a special case are real and complex scalar random variables, $\normal^{\beta}(\mu,\sigma^2)$).
We denote the uniform distribution on a sphere $S^{n-1}$ by $\uniform(n)$. Every Gaussian vector $g\sim \normal^1(0,I_n)$ can be written as a product $g=rq$ with $r$ and $q$ independent, where $r\sim \chi(n)$ is $\chi$-distributed with $n$ degrees of freedom, and $q\sim \uniform(n)$. 

\subsubsection{Projections of random vectors}\label{sub:projections}
The squared projected lengths of Gaussian and uniform distributed random vectors can be described using the $\chi^2$ and the beta distribution, respectively. A vector $X$ is $\chi^2$-distributed with $k$ degrees of freedom, $X\sim \chi^2(k)$, if the cumulative distribution function (cdf) is
\begin{equation*}
  \Prob\{X\leq x\} = \frac{1}{2^{k/2}\Gamma(k/2)}\int_0^x t^{k/2-1}e^{-t/2} \ \diff{t}.
\end{equation*}
The special case $\chi^2(2)$ is the exponential distribution with parameter $1/2$, written $\mathrm{exp}(1/2)$. The beta distribution $\Beta(a,b)$ is defined for $a,b>-1$, and has cdf supported on $[0,1]$,
\begin{equation}\label{eq:betadist}
 \Prob\{X\leq x\} = \frac{1}{\Beta(a,b)} \int_0^x t^{a-1}(1-t)^{b-1} \ \diff{t},
\end{equation}
where $\Beta(a,b)=\Gamma(a)\Gamma(b)/\Gamma(a+b)$ is the beta function.
For a vector $x\in \F^{n}$, denote by $\pi_k(x)$ the projection onto the first $k$ coordiantes and by  $\|\pi_k(x)\|^2=|x_1|^2+\dots+|x_k|^2$ its squared length. The following facts are known:
\begin{itemize}
\item If $g\sim \normal^\beta(0,I_n)$, then $\beta \|\pi_k(g)\|^2\sim \chi^2(\beta k)$;
\item If $q\sim \mathcal{U}(n)$, then $\|\pi_k(q)\|^2\sim \Beta(k/2,(n-k)/2)$.
\end{itemize}

The first claim is a standard fact about the normal distribution and can be derived directly from it, see for example~\cite{Boltzmann1881}. The statement for the uniform distribution can be derived from the Gaussian one, but also follows by a change of variables from expressions for the volume of tubular neighbourhoods of subspheres of a sphere, see for example~\cite[Section 20.2]{BC2013}.
Since all the distributions considered are orthogonally (in the real case) or unitarily (in the complex case) invariant, these observations hold for the projection of a random vector onto {\em any} $k$-dimensional subspace, not just the first $k$ coordinates.

\subsubsection{Random matrix ensembles}\label{subsec:randmat}
If $P(x)$ is a singular matrix polynomial with a simple eigenvalue $\lambda$, then the set of perturbation directions for which the directional sensitivity is not finite is a proper Zariski closed subset, see Theorem~\ref{thm:dd-2}. 
It is therefore natural and convenient to consider probability measures on the space of perturbations that have measure zero on proper Zariski closed subsets. This is the case, for example, if the measure is
absolutely continuous with respect to the Lebesgue measure. In this paper we will work with real and complex Gaussian and uniform distributions.
For a detailed discussion of the random matrix ensembles used here we refer to~\cite[Chapters 1-2]{forrester2010log}.

For a random matrix we write $G\sim \ginibre^\beta_n(\mu,\sigma^2)$ if each entry of $G$ is an independent $\normal^\beta(\mu,\sigma^2)$ random variable, and call this a Gaussian random matrix. In the case $\beta=2$ this is called the {\em Ginibre ensemble}~\cite{ginibre1965statistical}. Centered ($\mu=0$) Gaussian random matrices are orthogonally (if $\beta=1$) or unitarily (if $\beta=2$) invariant (\cite[Lemma 1]{mezzadri2007}) and the joint density of their entries is given by
\begin{equation*}
  \frac{1}{(2\pi/\beta)^{\beta n^2/2}} e^{-\frac{\beta \|G\|^2}{2}},
\end{equation*}
which takes into account the fact the real and imaginary parts of the entries of a complex Gaussian have variance $1/2$.
In addition, we
consider the {\em circular real ensemble} $\CRE(n)$ for real orthogonal matrices in $O(n)$, and the {\em circular unitary ensemble} $\CUE(n)$~\cite{dyson1962threefold} for unitary matrices in $U(n)$, where both distributions correspond to the unique Haar probability measure on the corresponding groups.

\subsection{The probabilistic QR decomposition}\label{sub:probqr}
Any nonsingular matrix $A\in \F^{n\times n}$ has a unique QR-decomposition $A=QR$,
where $Q\in O(n)$ (if $\F=\R$) or $U(n)$ (if $\F=\C$), and $R\in \F^{n\times n}$ is
upper triangular with $r_{ii}>0$~\cite[Part II]{trefethen97}. The following proposition describes the distribution of the factors $Q$ and $R$ in the QR-decomposition of a (real or complex) Gaussian random matrix. 

\begin{proposition}\label{prop:qr}
Let $G\sim \ginibre_n^\beta(0,1)$ be a Gaussian random matrix, $\beta\in \{1,2\}$. Then $G$ can be factored uniquely as $G=QR$, where $R=(r_{jk})_{1\leq j\leq k\leq n}$ is upper triangular and
\begin{itemize}
\item $Q\sim \CUE(n)$ if $\beta=2$ and $Q\sim \CRE(n)$ if $\beta=1$;
\item $\beta r_{ii}^2\sim \chi^2(\beta(n-i+1))$ for $i\in \{1,\dots,n\}$;
\item $r_{jk}\sim \normal^\beta(0,1)$ for $1\leq j<k\leq n$. 
\end{itemize}
Moreover, all these random variables are independent. 
\end{proposition}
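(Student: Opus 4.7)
The plan is to apply the Gram--Schmidt orthogonalization process iteratively to the columns of $G$, exploiting the rotational invariance of the Gaussian distribution. Existence and uniqueness of the QR decomposition with positive diagonal follows from the fact that $G$ is almost surely invertible (the singular matrices form a proper Zariski closed subset of $\F^{n\times n}$, hence have measure zero under a Gaussian density), so on a full-measure event $Q$ and $R$ are well-defined functions of $G$.

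Denote the columns of $G$ by $g_1,\dots,g_n$, which are i.i.d.\ $\normal^\beta(0,I_n)$. The Gram--Schmidt process produces, at step $i$, the identities $r_{ji}=\langle q_j,g_i\rangle$ for $j<i$, $r_{ii}=\|g_i-\sum_{j<i}r_{ji}q_j\|$, and $q_i=(g_i-\sum_{j<i}r_{ji}q_j)/r_{ii}$. The inductive step is this: conditional on $q_1,\dots,q_{i-1}$ (equivalently on $g_1,\dots,g_{i-1}$), the column $g_i$ is still $\normal^\beta(0,I_n)$ by independence of the columns. Completing $(q_1,\dots,q_{i-1})$ to an orthonormal basis and using the orthogonal/unitary invariance of $\normal^\beta(0,I_n)$, the coordinates of $g_i$ in this basis are i.i.d.\ $\normal^\beta(0,1)$. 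The first $i-1$ of them are exactly $r_{1i},\dots,r_{(i-1)i}$; the projection of $g_i$ onto the orthogonal complement of $\mathrm{span}(q_1,\dots,q_{i-1})$ is a standard Gaussian on this $(n-i+1)$-dimensional subspace, so by the facts recalled in Section~\ref{sub:projections}, $\beta r_{ii}^2\sim\chi^2(\beta(n-i+1))$ and $q_i$ is uniform on the unit sphere of that subspace, with $r_{ii}$ and $q_i$ independent and both independent of the $r_{ji}$.

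To identify the law of $Q=[q_1\cdots q_n]$, I would use a symmetry argument: for any fixed $U\in O(n)$ (resp.\ $U(n)$) we have $UG\stackrel{d}{=}G$, and since $UG=(UQ)R$ is again a valid QR factorization with positive diagonal, uniqueness implies $UQ\stackrel{d}{=}Q$. Thus the distribution of $Q$ is left-invariant under the group, hence equals the Haar probability measure, i.e.\ $\CRE(n)$ or $\CUE(n)$. The same invariance also yields that $Q$ is independent of $R$: the conditional distribution of $Q$ given $R$ is left-invariant, so equals Haar, independently of $R$. Combining with the step-by-step independence from the inductive construction gives joint independence of all the listed random variables.

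The main obstacle is the careful bookkeeping of conditional distributions across the $n$ iterations: one must verify at each step that the new Gaussian coordinates arising from projecting $g_i$ onto $\mathrm{span}(q_1,\dots,q_{i-1})^\perp$ are independent of the earlier $r_{jk}$ and of $q_1,\dots,q_{i-1}$, which is where the rotational invariance of $\normal^\beta(0,I_n)$ conditional on previously revealed information is essential.
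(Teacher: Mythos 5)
Your proof is correct. For $Q$, the left-invariance argument ($UG\stackrel{d}{=}G$ plus uniqueness of the QR factorization gives $UQ\stackrel{d}{=}Q$, hence Haar) is exactly the ``easy and conceptual derivation'' of Mezzadri that the paper cites, and running the same invariance conditionally on $R$ does yield $Q\perp R$ as you claim. For $R$ you depart from the paper's route: the paper points to the known Jacobian of the change of variables $G\mapsto(Q,R)$ (Edelman--Rao), from which the joint density of $(Q,R)$ factorizes and the $\chi^2$ diagonal, Gaussian off-diagonal, and mutual independence drop out in one shot; you instead obtain the law of $R$ by iterative Gram--Schmidt with conditioning and rotational invariance. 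Your route is more elementary and self-contained (no Jacobian formula needed), at the cost of the conditional bookkeeping you flag at the end. One point worth tightening there: the parenthetical ``(equivalently on $g_1,\dots,g_{i-1}$)'' is not literal, since $\sigma(q_1,\dots,q_{i-1})$ is in general a strict sub-$\sigma$-algebra of $\sigma(g_1,\dots,g_{i-1})$. To conclude that the step-$i$ entries $(r_{1i},\dots,r_{ii})$ are independent of \emph{all} earlier $r_{jk}$ (and not merely of $q_1,\dots,q_{i-1}$), one should condition on the larger $\sigma$-algebra $\sigma(g_1,\dots,g_{i-1})$, relative to which $q_1,\dots,q_{i-1}$ are determined and $g_i$ is still $\normal^\beta(0,I_n)$, so the rotational-invariance computation goes through verbatim and shows the conditional law of $(r_{1i},\dots,r_{ii})$ is a fixed product measure; combining this with the invariance-based $Q\perp R$ then yields the full joint independence of all the listed random variables.
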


An easy and conceptual derivation of the distribution of $Q$ can be found in~\cite{mezzadri2007}, while the distribution of $R$ can be deduced from the known expression for the Jacobian of the QR-decomposition~\cite[3.3]{edelman2005random}. 

\subsection{Proof of Theorem~\ref{thm:main-distr}}\label{sub:mainproof}
In this section we present the proofs of Theorem~\ref{thm:main-distr} and the corollaries that follow from it.
To simplify notation, we set $\ell=n-r+1$.
Recall from Corollary~\ref{cor:directional} the expression
\begin{equation*}
  \sigma_E = \frac{1}{\|E\|}\left|\frac{\det\left(X^*E(\lambda)Y\right)}{u^*P'(\lambda)v \cdot \det(U^*E(\lambda)V)}\right|,
\end{equation*}
where the columns of $X=[U\ u],Y=[V\ v]\in \F^{n\times \ell}$ are orthonormal bases of $\ker P(\lambda)^*$ and $\ker P(\lambda)$, the columns of $U,V$ represent bases of $\ker_\lambda P(x)^*$ and $\ker_\lambda P(x)$, respectively, and $\gamma_P$ is defined in~\eqref{eq:cdef}.

\par{\it Proof of Theorem~\ref{thm:main-distr}}. \ignorespaces
We first assume $r<n$. By the scale invariance of the directional sensitivity $\sigma_E$, we consider Gaussian perturbations $E\sim\normal^\beta(0,\sigma^2I_{\beta N})$ (recall that we interpret $E$ as a vector in $\F^N$), where $\sigma^2=(\sum_{j=0}^d |\lambda|^{2j})^{-1}$.  This scaling ensures that the entries of
$E(\lambda)$ are independent $\normal^{\beta}(0,1)$ random variables.
Since the distribution of $E(\lambda)$ is orthogonally/unitarily invariant, the quotient $|\det(X^*E(\lambda)Y|/|\det\left(U^*E(\lambda)V\right)|$ has the same distribution as the quotient $|\det(G)/\det(\overline{G})|$, where $G$ is the upper left $\ell\times \ell$ submatrix of $E(\lambda)$ and $\overline{G}$ the upper left $(\ell-1)\times (\ell-1)$ matrix. 
For the distribution considered, $G$ is almost surely invertible, with inverse $H=G^{-1}$. 
By Cramer's rule, $|\det(G)/\det(\overline{G})|=|h_{\ell\ell}|^{-1}$.
We are thus interested in the distribution
\begin{equation*}
  \Prob\{\sigma_E\geq t\} = \Prob\left\{ \left|\frac{\det(X^*E(\lambda)Y)}{\det(U^*E(\lambda)V)}\right| \geq |u^*P'(\lambda)v| t \|E\|\right\} = \Prob\left\{\frac{1}{|h_{\ell\ell}|} \geq |u^*P'(\lambda)v| t \|E\|\right\},
\end{equation*}
where $h_{\ell\ell}$ is the lower right corner of the inverse of an $\ell\times \ell$ Gaussian matrix $G$. To study the distribution of $|h_{\ell\ell}|^{-1}$, we resort to the probabilistic QR-decomposition discussed in Section~\ref{sub:probqr}. If $G=QR$ is the unique QR-decomposition of $G$ with positive diagonals in $R$, then the inverse is given by $H=R^{-1}Q^*$, and a direct inspection reveals that the lower-right element $h_{\ell\ell}$ of $H$ is $h_{\ell\ell} = q^*_{\ell\ell}/r_{\ell\ell}$. 

From Section~\ref{sub:probqr} it follows that $Q\sim \CRE(n)$ or $\CUE(n)$, and $\beta r_{\ell\ell}^2\sim \chi^2(\beta)$. Moreover, each column of $Q$ is uniformly distributed on the sphere $S^{\beta \ell-1}$, so that $|q_{\ell\ell}|^2\sim \Beta(\beta/2,\beta(\ell-1)/2)$ (by Section~\ref{sub:projections}), and $\{r_{\ell\ell}^2,|q_{\ell\ell}|^2\}$ are independent. We therefore get
\begin{equation*}
\Prob\left\{\frac{1}{|h_{\ell\ell}|} \geq |u^*P'(\lambda)v| t\|E\|\right\} = \Prob\left\{r_{\ell\ell}^2\geq |q_{\ell\ell}|^2|u^*P'(\lambda)v|^2t^2\|E\|^2 \right\}.
\end{equation*}
Setting $\gamma_P=|u^*P'(\lambda)v| \cdot (\sum_{j=0}^d |\lambda|^{2j})^{-1/2}$ (see~\eqref{eq:cdef}),
we arrive at
\begin{equation*}
\Prob\{\sigma_E\geq t\} = \Prob\left\{r_{\ell\ell}^2\geq |q_{\ell\ell}|^2\gamma_P^2t^2 \left(\sum_{j=0}^d |\lambda|^{2j}\right)\cdot \|E\|^2 \right\}
\end{equation*}
Let $p_0=(1,\lambda,\cdots,\lambda^d)^T/(\sum_{i=0}^d |\lambda|^{2i})^{1/2}$. Then
we can rearrange the coefficients of $E(x)$ to a matrix $F\in \F^{n^2\times (d+1)}$ so that
\begin{equation*}
  \|Fp_0\|^2 = \|E(\lambda)\|^2 \cdot \left(\sum_{i=0}^d |\lambda|^{2i}\right)^{-1}.
\end{equation*}
Moreover, if $Q=[p_0\ p_1\ \cdots p_{d}]$ is an orthogonal/unitary matrix with $p_0$ as first column, then 
\begin{equation*}
 \|E\|^2 = \|F\|^2 = \|FQ\|^2 = \left(\sum_{i=0}^d |\lambda|^{2i}\right)^{-1} \left(\|E(\lambda)\|^2 +\sum_{j=1}^d \|F\tilde{p}_j\|^2\right),
\end{equation*}
where $\tilde{p}_j=p_j\cdot (\sum_{i=0}^d |\lambda|^{2i})^{1/2}$. 
If we denote by $G^{c}$ the vector consisting of those entries of $E(\lambda)$ that are not in $G$, then
\begin{equation*}
 \|E(\lambda)\|^2 = \|G\|^2+\|G^{c}\|^2 = \|R\|^2+\|G^{c}\|^2.
\end{equation*}
It follows that 
\begin{equation*}
\left(\sum_{j=0}^d |\lambda|^{2j}\right)\cdot \|E\|^2 = \|R\|^2+\|G^{c}\|^2 + \sum_{j=1}^d \|F\tilde{p}_j\|^2. 
\end{equation*}
Therefore, the factor $r_{\ell\ell}^2$, itself a square of a (real or complex) Gaussian, is a summand in a sum of squares of $N=n^2(d+1)$ Gaussians, and the quotient
\begin{equation*}
\frac{r_{\ell\ell}^2}{\left(\sum_{j=0}^d |\lambda|^{2j}\right)\|E\|^2}
\end{equation*}
is equal to the squared length of the projection of a uniform random vector in $S^{\beta N-1}$ onto the first $\beta$ coordinates. By Section~\ref{sub:projections}, this is $\Beta(\beta/2,\beta(N-1)/2)$ distributed. Denoting this random variable by $Z_N$ and $|q_{\ell\ell}|^2$ by $Z_\ell$, we obtain
\begin{equation*}
\Prob\{\sigma_E\geq t\} = \Prob\{Z_N\geq \gamma_P^2t^2 Z_\ell\}.
\end{equation*}
This establishes the claim in the case $r<n$. 
If $r=n$, we use the expression (see~\eqref{eq:fran}),
\begin{equation*}
  \sigma_E = \left|\frac{u^*E(\lambda)v}{\|E\| |u^*P'(\lambda)v|}\right|,
\end{equation*}
where $u$ and $v$ are eigenvectors. By orthogonal/unitary invariance,
$\sigma_E^2$ has the same distribution as the the squared norm of a Gaussian. By the same argument as above, we can bound $\|E\|$ in terms of $\|E(\lambda)\|$, and
the quotient with $\|E(\lambda)\|^2$ is then the squared projected length of the first $\beta$ coordinates of a uniform distributed vector in $S^{\beta N-1}$, which is $\Beta(\beta/2,\beta(N-1)/2)$ distributed.
\endproof

\begin{remark}\label{rem:spherical-gaussian}
If $N+$ is large, then for a (real or complex) Gaussian perturbation with entry-wise variance $1/N$, by Gaussian concentration (see ~\cite[Theorem 5.6]{boucheron2013concentration}), $\|E\|$ is close to $1$ with high probability:
\begin{equation*}
  \Prob\{|\|E\|-1|\geq t\} \leq 2e^{-Nt^2/2}. 
\end{equation*}
This means that the distribution 
of $\|E\|\sigma_E$ for a Gaussian perturbation will be close to that of $\sigma_E$ for a uniform perturbation. 
Even for moderate sizes of $d$ and $n$, the result can be numerically almost indistinguishable.

In fact, when $G$ is Gaussian, then the distribution can be expressed explicitly as
\begin{equation*}
 \Prob\{|h_{\ell\ell}|^{-1}\geq t\} = {_1}F_1(1,\ell;-Nt^2),
\end{equation*}
where $_1F_1(a,b;z)$ denotes the confluent hypergeometric function (this follows by mimicking the proof of Theorem~\ref{thm:main-distr}, expressing the distribution in terms of a quotient of a $\chi^2$ and a beta random variable, and writing out the resulting integrals). Similarly, using the same computations as in the proof of Lemma~\ref{le:betaexp}, we get the exact expression
\begin{equation*}
 \Prob\{|h_{\ell\ell}|^{-1}\geq t\} = \begin{cases}
 1- {_2}F_1(1-N,1,\ell;t^2) & \text{ if } t\leq 1\\
 _2F_1(1-\ell,1,N;t^{-2}) & \text{ if } t\geq 1,
 \end{cases}
\end{equation*}
where $_2F_1(a,b,c;z)$ is the hypergeometric function.
The case distinction corresponds to different branches of the solution of the hypergeometric differential equation. See~\cite{pearson2009computation,pearson2017numerical} for more on computing with hypergeometric functions.
\end{remark}

\section{Weak condition numbers of simple eigenvalues of singular matrix polynomials}\label{sec:main}
The tail bounds on the directional sensitivity can easily be translated into statements about condition numbers and discuss some consequences and interpretations.

\begin{theorem}\label{thm:main-complex}
Let $P(x)\in \C^{n\times n}_d[x]$ be a matrix polynomial of rank $r$, and let $\lambda$ be a simple eigenvalue of $P(x)$. Then 
\begin{itemize}
\item the worst-case condition number is
\begin{equation}\label{eq:worst-cond-bound}
\kappa = \begin{cases} \infty & \text{ if } r<n,\\
\frac{1}{\gamma_P} & \text{ if } r=n
\end{cases}
\end{equation} 
\item the stochastic condition number, with respect to uniformly distributed perturbations, is
\begin{equation}\label{eq:stoch-cond-bound}
  \kappa_s = \frac{1}{\gamma_P}\frac{\pi}{2}\frac{\Gamma(N)\Gamma(n-r+1)}{\Gamma(N+1/2)\Gamma(n-r+1/2)}
\end{equation} 
\item if $r<n$ and $\delta\in (0,1)$, then the $\delta$-weak worst-case condition number, with respect to uniformly distributed perturbations, is bounded by
\begin{equation}\label{eq:weak-cond-bound}
  \kappa_{w}(\delta) \leq \frac{1}{\gamma_P}\max\left\{1,\sqrt{\frac{n-r}{\delta N}}\right\}
\end{equation}
\end{itemize}
\end{theorem}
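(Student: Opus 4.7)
\proof (Proposal.) The three items will be handled in order, each leveraging the distributional results already established for $\sigma_E$.

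For the worst-case condition number, the regular case $r=n$ is exactly Proposition~\ref{prop:cond-regular}. For $r<n$, the plan is to exhibit perturbation directions along which the directional sensitivity blows up. Using formula~\eqref{eq:directional-sensitivity}, I would pick $E(x)$ so that $\det(U^*E(\lambda)V)\to 0$ while $\det(X^*E(\lambda)Y)$ stays bounded away from $0$ (geometrically, this means driving the perturbed $(\ell-1)\times(\ell-1)$ block towards singularity while keeping the bordered $\ell\times\ell$ block nonsingular). Since $\ell-1=n-r\geq 1$, such a sequence of unit-norm $E$'s exists, forcing $\sigma_E\to\infty$ and thus $\kappa=\sup_{\|E\|\leq 1}\sigma_E=\infty$ by~\eqref{eq:worst-case-direction}. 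Alternatively (and perhaps more cleanly) one can invoke the fact that $f$ is discontinuous at any singular $P(x)$, which is a direct consequence of Theorem~\ref{thm:dd-2} and the observation that the set of perturbations leading to a finite directional derivative is only Zariski dense.

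The stochastic condition number is the most direct: by definition $\kappa_s=\Expect[\sigma_E]$ when $E\sim\uniform(\beta N)$ and $\sigma_E$ is the G\^ateaux directional sensitivity of $f$, which exists almost surely by Theorem~\ref{thm:dd-2}. So~\eqref{eq:stoch-cond-bound} is simply the complex ($\beta=2$) case of the expectation formula in Corollary~\ref{cor:expected}, and no extra work is required beyond citing it.

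The weak worst-case bound is the main point. From Definition~\ref{def:wwc}, $\kappa_w(\delta)\leq y$ whenever $\Prob\{\sigma_E\geq y\}\leq \delta$, so it suffices to invert the tail bound~\eqref{eq:tailbounds}. In the complex case the tail bound reads
\begin{equation*}
\Prob\{\sigma_E\geq t\}\leq \frac{1}{\gamma_P^2}\frac{n-r}{N}\frac{1}{t^2}\qquad\text{for } t\geq \gamma_P^{-1}.
\end{equation*}
Setting the right-hand side equal to $\delta$ yields the candidate $t_\delta=\gamma_P^{-1}\sqrt{(n-r)/(\delta N)}$. The only subtlety is the validity constraint $t\geq\gamma_P^{-1}$: if $\delta\leq (n-r)/N$ then $t_\delta\geq\gamma_P^{-1}$ and the tail bound gives $\kappa_w(\delta)\leq t_\delta$ directly; if instead $\delta>(n-r)/N$, then at $t=\gamma_P^{-1}$ the tail bound already yields $\Prob\{\sigma_E\geq\gamma_P^{-1}\}\leq (n-r)/N\leq \delta$, so $\kappa_w(\delta)\leq\gamma_P^{-1}$. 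Combining the two cases via a maximum delivers~\eqref{eq:weak-cond-bound}.

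I expect the only mildly delicate point to be the $r<n$ case of the worst-case statement, where one must be careful to produce perturbations that keep $\|E\|$ bounded while sending $\det(U^*E(\lambda)V)$ to zero at a faster rate than $\det(X^*E(\lambda)Y)$; all the other steps are bookkeeping on top of Theorem~\ref{thm:main-distr} and Corollary~\ref{cor:expected}.
\endproof
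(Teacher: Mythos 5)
Your proposal is correct and follows essentially the same route as the paper: all three items reduce to Theorem~\ref{thm:main-distr} and Corollary~\ref{cor:expected}, and the weak bound is obtained by inverting the complex tail estimate with a case split at $\delta=(n-r)/N$. The only cosmetic difference is that in the case $\delta>(n-r)/N$ you evaluate the tail bound at $t=\gamma_P^{-1}$ directly, while the paper invokes monotonicity of $\delta\mapsto\kappa_w(\delta)$; both arguments are valid and yield the same bound.
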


The expression for the stochastic condition number involves the quotient of gamma functions, which can be simplified using the well-known bounds
\begin{equation}\label{eq:wallis}
  \sqrt{x} \leq \frac{\Gamma(x+1)}{\Gamma(x+1/2)} \leq \sqrt{x+1/2},
\end{equation}
which hold for $x>0$~\cite{wendel1948}. Using these bounds on the numerator and denominator of~\eqref{eq:stoch-cond-bound}, we get the more interpretable
\begin{equation}\label{eq:stoch-cond-bound-wallis}
\kappa_s \leq \frac{1}{\gamma_P} \frac{\pi}{2}\sqrt{\frac{n-r+1/2}{N-1/2}}\leq \frac{1}{\gamma_P} \frac{\pi}{2}\sqrt{\frac{n-r+1}{N}}.
\end{equation}
The bound on the weak condition number~\eqref{eq:weak-cond-bound} shows that $\kappa_w(1/2)$, which is the median of the same random variable of which $\kappa_s$ is the expected value, is bounded by $1/\gamma_P$, which is the expression of the worst-case condition number in the regular case $r=n$.

The situation changes dramatically when considering real matrix polynomials with real perturbations, as in this case even the stochastic condition becomes infinite if the matrix polynomial is singular. In the statement we denote the resulting condition number with respect to real perturbations by using the superscript $\R$.

\begin{theorem}\label{thm:main-real}
Let $P(x)\in \R^{n\times n}_d[x]$ be a real matrix polynomial of rank $r$, and let $\lambda\in \C$ be a simple eigenvalue of $P(x)$. Then 
\begin{itemize}
\item the worst-case condition number is
\begin{equation}\label{eq:worst-cond-bound-real}
  \kappa^{\R} = \begin{cases}
  \infty & \text{ if } r<n,\\
  \frac{1}{\gamma_P} & \text{ if } r=n
  \end{cases}
\end{equation}
\item the stochastic condition number, with respect to uniformly distributed real perturbations, is
\begin{equation}\label{eq:stoch-cond-bound-real}
  \kappa_s^{\R} = \begin{cases}
    \infty & \text{ if } r<n,\\
    \frac{1}{\gamma_P} \frac{\Gamma(N/2)}{\sqrt{\pi}\Gamma((N+1)/2)} &  \text{ if } r=n
  \end{cases}
\end{equation}
\item if $r<n$ and $\delta\in (0,1)$, then the $\delta$-weak worst-case condition, with respect to uniformly distributed real perturbations, is
\begin{equation}\label{eq:weak-cond-bound-real}
  \kappa_{w}^{\R}(\delta) \leq \frac{1}{\gamma_P}\max\left\{1,\sqrt{\frac{n-r}{N}} \frac{1}{\delta}\right\}
\end{equation}
\item if $r<n$ and $\delta < \sqrt{(n-r)/N}$, then the $\delta$-weak stochastic condition number satisfies
\begin{equation}\label{eq:weak-stoch-cond-bound-real}
\kappa_{ws}^{\R}(\delta) \leq \frac{1}{\gamma_P}\left(\frac{1}{1-\delta}\right)\left(1+\sqrt{\frac{n-r}{N}}\log\left(\sqrt{\frac{n-r}{N}}\delta^{-1}\right)\right)
\end{equation}
\end{itemize}
\end{theorem}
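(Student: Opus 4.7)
The plan is to treat the four assertions in turn, drawing on Theorem~\ref{thm:main-distr} with $\beta=1$ and its consequence Corollary~\ref{cor:expected}. The worst-case formula (\ref{eq:worst-cond-bound-real}) follows in the case $r=n$ from Proposition~\ref{prop:cond-regular}, and in the case $r<n$ from the fact that $\det(U^*E(\lambda)V)$ can be made arbitrarily close to zero while $\det(X^*E(\lambda)Y)$ is kept bounded below, so by Corollary~\ref{cor:directional} the supremum defining $\kappa$ in Definition~\ref{def:turing} is infinite. The stochastic formula (\ref{eq:stoch-cond-bound-real}) is then read off directly from the second and third cases of the expected-value statement in Corollary~\ref{cor:expected}.

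For (\ref{eq:weak-cond-bound-real}) I would invert the real tail bound of Corollary~\ref{cor:expected}. Two applications of Wendel's inequality (\ref{eq:wallis}) give
\begin{equation*}
\frac{\Gamma(N/2)\,\Gamma((n-r+1)/2)}{\Gamma((N+1)/2)\,\Gamma((n-r)/2)} \;\leq\; \sqrt{\frac{n-r}{N-1}},
\end{equation*}
so that $\Prob\{\sigma_E\geq t\}\leq \frac{2}{\pi\gamma_P}\sqrt{(n-r)/(N-1)}\,t^{-1}$ for $t\geq \gamma_P^{-1}$. Requiring the right-hand side to be at most $\delta$ and using $\frac{2}{\pi}(N-1)^{-1/2}\leq N^{-1/2}$, valid for $N\geq 2$, solves to $t\leq \gamma_P^{-1}\delta^{-1}\sqrt{(n-r)/N}$. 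The outer maximum with $\gamma_P^{-1}$ in (\ref{eq:weak-cond-bound-real}) is forced by the hypothesis $t\geq \gamma_P^{-1}$ in the tail bound: if the formula above returns a smaller value, one simply uses $\kappa_w^{\R}(\delta)\leq \gamma_P^{-1}$.

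The most delicate part is (\ref{eq:weak-stoch-cond-bound-real}), which I would prove by truncated tail integration. Writing $K=\kappa_w^{\R}(\delta)$ and using $\Prob\{\sigma_E\leq K\}\geq 1-\delta$, I bound the conditional expectation by
\begin{equation*}
\kappa_{ws}^{\R}(\delta) \;=\; \frac{\Expect[\sigma_E\,\mathbf{1}_{\sigma_E\leq K}]}{\Prob\{\sigma_E\leq K\}} \;\leq\; \frac{1}{1-\delta}\int_0^K \Prob\{\sigma_E> t\}\,\diff{t}.
\end{equation*}
I then split the integral at $t=\gamma_P^{-1}$: on $[0,\gamma_P^{-1}]$ the integrand is at most $1$ and contributes at most $\gamma_P^{-1}$, while on $[\gamma_P^{-1},K]$ the $1/t$ tail bound from Corollary~\ref{cor:expected} (after the Wendel-type simplification of Part~3) integrates to $\gamma_P^{-1}\sqrt{(n-r)/N}\,\log(K\gamma_P)$. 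Substituting the value $K\gamma_P\leq \delta^{-1}\sqrt{(n-r)/N}$ from Part~3, which is legitimate precisely because the hypothesis $\delta<\sqrt{(n-r)/N}$ ensures $K\geq \gamma_P^{-1}$ and hence that the split is non-trivial, produces the claimed bound. The main obstacle is the interaction between the two regimes: the tail bound of Corollary~\ref{cor:expected} is only valid for $t\geq \gamma_P^{-1}$, and the absence of a correspondingly sharp estimate for small $t$ is what forces the split and is responsible for the additive $1$ in the final bound, while the heavy $1/t$ tail characteristic of the real singular case is responsible for the logarithmic factor.
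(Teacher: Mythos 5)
Your proposal is correct and follows essentially the same path as the paper: read off the worst-case and stochastic formulas from Theorem~\ref{thm:main-distr}/Corollary~\ref{cor:expected}, invert the real $1/t$ tail bound (after the Wendel simplification $\Gamma(N/2)\Gamma((n-r+1)/2)/(\Gamma((N+1)/2)\Gamma((n-r)/2))\leq\sqrt{(n-r)/(N-1)}$ and the observation $\tfrac{2}{\pi}\sqrt{N/(N-1)}\leq 1$ for $N\geq 2$) to obtain the weak worst-case bound, and then control the conditional expectation by truncated tail integration split at $\gamma_P^{-1}$. The one cosmetic difference is that where the paper packages the final step as an invocation of Lemma~\ref{le:waccolemma} (with $a=\gamma_P^{-1}$, $C=\sqrt{(n-r)/N}$, $t_0=C(\delta\gamma_P)^{-1}$), you re-derive that lemma inline via $\Expect[\sigma_E\,\mathbf{1}_{\sigma_E\leq K}]\leq\int_0^K\Prob\{\sigma_E>t\}\,\diff{t}$; the split and the substitution $K\gamma_P\leq\delta^{-1}\sqrt{(n-r)/N}$ reproduce exactly the same additive-one-plus-logarithm structure, so the arguments are mathematically identical.
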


It is instructive to compare the weak condition numbers in the singular case to the worst-case and stochastic condition number in the regular case. In the regular case ($n=r$), when replacing the worst-case with the stochastic condition we get an improvement by a factor of $\approx N^{-1/2}$, which is consistent with previous work~\cite{armentano2010stochastic} (see also Section~\ref{sec:strong}) relating the worst-case to the stochastic condition. We will see in Section~\ref{sub:measure-concentration} that the expected value in the case $n=r$ captures the typical perturbation behaviour of the problem more accurately than the worst-case bound.
 Among the many possible interpretations of the weak worst-case condition, we highlight the following:
\begin{itemize}
\item Since the bounds are monotonically decreasing as the rank $r$ increases, we can get bounds independent of $r$. Specifically, we can replace the quotient $\sqrt{(n-r)/N}$ with $1/\sqrt{n(d+1)}$. This is useful since, in applications, the rank is not always known.
 \item While the stochastic condition number~\eqref{eq:stoch-cond-bound-real}, which measures the {\em expected} sensitivity of the problem of computing a singular eigenvalue, is infinite, for $4(n-r)<N$ the {\em median} sensitivity is bounded by 
\begin{equation*}
  \kappa^{\R}_w(1/2) \leq \frac{1}{\gamma_P}.
\end{equation*}
The median is a more robust and arguably better summary parameter than the expectation.
\item Choosing $\delta = e^{-N}$ in~\eqref{eq:weak-stoch-cond-bound-real}, we get a weak stochastic condition bound of
\begin{equation*}
\kappa^{\R}_{ws}(e^{-N}) \leq \frac{1}{\gamma_P} \left(\frac{1+\sqrt{N(n-r)}}{1-e^{-N}}\right).
\end{equation*}
That is, the condition number improves from being unbounded to sublinear in $N$, by just removing a set of inputs of exponentially small measure.  
\end{itemize}

\begin{example}\label{ex:L-pencil-ex1}
Consider the matrix pencil $L(x)$ from~\eqref{eq:pencil-example}. 
This matrix pencil has rank $3$, with only one simple eigenvalue $\lambda=1$. 
As we will see in Example~\ref{ex:L-calculated},
the constant $\gamma_L$ appearing in the bounds is
\begin{equation*}
  \gamma_L^{-1} = 12.16.
\end{equation*}
In this example, $n=4$, $d=1$ and $r=3$, so that $n-r=1$, $N=n^2(d+1)=32$, and 
\begin{equation*}
 \sqrt{\frac{n-r}{N}} = 0.1767. 
\end{equation*}
For small enough $\delta$ we get the (not optimized) bound
$\kappa_w^{\R}(\delta) <  2.15 \cdot \delta^{-1}$.
\end{example}

It is easy to translate Corollary~\ref{cor:expected} into the main results, Theorem~\ref{thm:main-complex} and Theorem~\ref{thm:main-real}. For the weak stochastic condition, we need the following observation, which is a variation of~\cite[Lemma 2.2]{amelunxen2017average}.

\begin{lemma}\label{le:waccolemma}
Let $Z$ be a random variable such that $\Prob\{Z\geq t\}\leq C\frac{a}{t}$ for $t>a$. Then for any $t_0>a$,
\begin{equation*}
\Expect[Z \ | \ Z\leq t_0] \leq \frac{a}{1-\frac{Ca}{t_0}}\left(1-C\log\left(\frac{a}{t_0}\right)\right).
\end{equation*}
\end{lemma}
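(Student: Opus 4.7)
The plan is to bound the conditional expectation by separately bounding the numerator $\Expect[Z \mathbb{1}_{Z\leq t_0}]$ and the denominator $\Prob\{Z\leq t_0\}$ in the identity
\[
  \Expect[Z \mid Z\leq t_0] = \frac{\Expect[Z\mathbb{1}_{Z\leq t_0}]}{\Prob\{Z\leq t_0\}}.
\]
The tail hypothesis makes sense only for a nonnegative $Z$ (or one that we may truncate below at $0$ without loss), so I will work under that implicit assumption, as is appropriate for the application to $\sigma_E$.

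First I would handle the numerator using the layer-cake formula, which for nonnegative $Z$ reads
\[
  \Expect[Z\mathbb{1}_{Z\leq t_0}] = \int_0^{t_0} \Prob\{t < Z \leq t_0\}\,\diff{t} = \int_0^{t_0}\Prob\{Z>t\}\,\diff{t} - t_0\,\Prob\{Z>t_0\}.
\]
I would then split $\int_0^{t_0} = \int_0^a + \int_a^{t_0}$, bounding $\Prob\{Z>t\}\leq 1$ on $[0,a]$ and invoking the hypothesis $\Prob\{Z>t\}\leq Ca/t$ on $[a,t_0]$, so that
\[
  \int_0^{t_0}\Prob\{Z>t\}\,\diff{t} \leq a + Ca\log(t_0/a) = a\bigl(1 - C\log(a/t_0)\bigr).
\]
Dropping the nonpositive term $-t_0\,\Prob\{Z>t_0\}$ from the layer-cake expression then yields
\[
  \Expect[Z\mathbb{1}_{Z\leq t_0}] \leq a\bigl(1 - C\log(a/t_0)\bigr).
\]

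For the denominator, the tail bound applied at $t_0>a$ gives $\Prob\{Z>t_0\}\leq Ca/t_0$, hence
\[
  \Prob\{Z\leq t_0\} \geq 1 - \frac{Ca}{t_0},
\]
and dividing the two estimates produces the claimed inequality. The argument is essentially routine once the layer-cake identity is written with the correct truncation term; the only thing to be slightly careful about is keeping the sign of the $-t_0\,\Prob\{Z>t_0\}$ contribution in mind so that discarding it genuinely yields an upper bound, and ensuring that $t_0>a$ so that the denominator lower bound is nontrivial. There is no real obstacle beyond these bookkeeping points.
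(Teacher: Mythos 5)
Your proof is correct: the layer-cake bound $\Expect[Z\mathbb{1}_{Z\leq t_0}]\leq \int_0^a 1\,\diff{t}+\int_a^{t_0}\tfrac{Ca}{t}\,\diff{t}=a\bigl(1-C\log(a/t_0)\bigr)$ combined with $\Prob\{Z\leq t_0\}\geq 1-Ca/t_0$ is exactly the standard argument; the paper gives no proof of this lemma itself but refers to it as a variation of Lemma 2.2 of the weak average-case analysis paper of Amelunxen and Lotz, whose proof proceeds in essentially this way. The only implicit assumptions, which the lemma statement itself shares and which hold in the paper's application ($Z=\sigma_E\geq 0$, $Ca/t_0=\delta<1$), are nonnegativity of $Z$ and $Ca<t_0$ so that the denominator bound is positive, and you have flagged these appropriately.
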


\par{\it Proof of Theorem~\ref{thm:main-complex} and Theorem~\ref{thm:main-real}}. \ignorespaces
The statements about the worst-case,~\eqref{eq:worst-cond-bound} and~\eqref{eq:worst-cond-bound-real}, and about the stochastic condition number,~\eqref{eq:stoch-cond-bound} and~\eqref{eq:stoch-cond-bound-real}, follow immediately from Theorem~\ref{thm:main-distr} and Corollary~\ref{cor:expected}.

For the weak condition number in the complex case, if $\delta \leq (n-r)/N$, then setting 
\begin{equation*}
  t := \frac{1}{\gamma_P}\sqrt{\frac{n-r}{\delta N}}
\end{equation*}
we get $\gamma_P t \geq 1$, and therefore, using the complex tail bound from Corollary~\ref{cor:expected},
\begin{equation*}
  \Prob\{\sigma_E \geq t\} \leq \frac{1}{\gamma_P^2t^2}\frac{n-r}{N} = \delta.
\end{equation*}
This yields $\kappa_w(\delta)\leq t$. If $\delta > (n-r)/N$, then we use the fact that the weak condition number is monotonically decreasing with $\delta$ (intuitively, the larger the set we are allowed to exclude, the smaller the condition number will be), to conclude that $\kappa_w(\delta)\leq \kappa_w(\delta_0)\leq 1/\gamma_P$, where $\delta_0:=(n-r)/N$. 

For the real case, if $r<n$ we use the bound
\begin{equation*}
  \frac{\Gamma(N/2)\Gamma((n-r+1)/2)}{\Gamma((N+1)/2)\Gamma((n-r)/2)} \leq \sqrt{\frac{n-r}{N-1}},
\end{equation*}
which follows from~\eqref{eq:wallis}. If $\delta<\sqrt{(n-r)/N}$, set
\begin{equation*}
  t := \frac{1}{\gamma_P}\sqrt{\frac{n-r}{N}}\frac{1}{\delta}.
\end{equation*}
Then
\begin{equation*}
 \Prob\{\sigma_E\geq t\} \leq \frac{1}{\gamma_P}\frac{2}{\pi}\sqrt{\frac{n-r}{N}}\frac{1}{t} = \frac{2}{\pi}\sqrt{\frac{N}{N-1}}\cdot \delta \leq \delta,
\end{equation*}
where for the last inequality we used the fact that $N\geq 2$. 
We conclude that $\kappa_w(\delta)\leq t$. If $\delta>\sqrt{(n-r)/N}$, then we use the monotonicity of the weak condition just as in the complex case.
Finally, for the weak stochastic condition number in the real case, we use Lemma~\ref{le:waccolemma} with $a=\gamma_P^{-1}$, $C=\sqrt{(n-r)/N}$ and $t_0=C(\delta \gamma_P)^{-1}$ in the conditional expectation. We just saw that $\kappa_{w}^{\R}(\delta)\leq t_0$, so that
\begin{equation*}
  \kappa_{ws}^{\R}(\delta) \leq \Expect[\sigma_E \ | \ \sigma_{E}\leq t_0] \leq \frac{1}{\gamma_P}\left(\frac{1}{1-\delta}\right)\left(1+\sqrt{\frac{n-r}{N}}\log\left(\sqrt{\frac{n-r}{N}}\delta^{-1}\right)\right),
\end{equation*}
where we used Lemma~\ref{le:waccolemma} in the second inequality.
\endproof

\section{Bounding the weak stochastic condition number}\label{sec:compweak}
In this section we illustrate how the weak condition number of the problem of computing a simple eigenvalue of a singular matrix polynomial can be estimated in practice. More precisely, we show that the weak condition number of a singular problem can be estimated in terms of the stochastic condition number of nearby regular problems. Before deriving the relevant estimates, given in Theorem~\ref{thm:main-approx}, we discuss the stochastic condition number of regular matrix polynomials.

\subsection{Measure concentration for the directional sensitivity of regular matrix polynomials}\label{sub:measure-concentration}

For the directional sensitivity in the regular case, $r=n$, the worst-case condition number is $\gamma_P^{-1}$, as was shown in Proposition~\ref{prop:cond-regular}. In addition, the expression for the stochastic condition number involves a ratio of gamma functions (see Corollary~\ref{cor:expected} or the case $r=n$ in Theorem~\ref{thm:main-complex} and Theorem~\ref{thm:main-real}). From~\eqref{eq:wallis} we get the approximation $\Gamma(k+1/2)/\Gamma(k)\approx \sqrt{k}$, so that the stochastic condition number for regular polynomial eigenvalue problems satisfies
\begin{equation*}
  \kappa_s \approx \frac{1}{\sqrt{N}} \kappa.
\end{equation*}
This is compatible with previously known results about the stochastic condition number in the smooth setting (see the discussion in Section~\ref{sec:strong}). A natural question is whether the directional sensitivity is likely to be closer to this expected value, or closer to the upper bound $\kappa$.

Theorem~\ref{thm:main-distr} describes the distribution of $\sigma_E$ as that of the (scaled) square root of a beta random variable. 
Using the interpretation of beta random variables as squared lengths of projections of uniformly distributed vectors on the sphere (see Section~\ref{sub:projections}), tail bounds for the distribution of $\sigma_E$ therefore translate into the problem of bounding the relative volume of certain subsets of the unit sphere. A standard argument from the realm of measure concentration on spheres, Lemma~\ref{le:concentration}, then implies that with high probability, $\sigma_E$ will stay close to its mean. 

\begin{lemma}\label{le:concentration}
Let $x\sim \uniform(\beta N)$ be a uniformly distributed vector on the (real or complex) unit sphere, where $\beta=1$ if $\F=\R$ and $\beta=2$ if $\F=\C$. Then 
\begin{equation*}
\Prob\{|x_1| \geq t\} \leq e^{-\beta (N-1)t^2/2}.
\end{equation*}
\end{lemma}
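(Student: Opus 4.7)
My plan is to reduce the concentration claim to a sharp tail bound for the Beta distribution, and then to establish that tail bound by a direct envelope comparison. By Section~\ref{sub:projections} applied with ambient dimension $\beta N$ and projection onto the first $\beta$ real coordinates, I have $|x_1|^2 \sim \Beta(\beta/2, \beta(N-1)/2)$ (in the complex case, $|x_1|^2$ is by definition the squared norm of the real projection onto the first two real coordinates of the ambient sphere $S^{2N-1}$). Since $1-y \leq e^{-y}$, the lemma will follow at once from the auxiliary envelope
\begin{equation}\label{eq:beta-envelope}
\Prob\{|x_1|^2 \geq t^2\} \leq (1-t^2)^{\beta(N-1)/2}.
\end{equation}

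In the complex case $\beta=2$ the Beta parameters are $(1, N-1)$, the density collapses to $(N-1)(1-u)^{N-2}$, and a direct integration shows that~\eqref{eq:beta-envelope} is actually an equality. For the real case $\beta=1$, writing $a = 1/2$ and $b = (N-1)/2$, the plan is to compare
\begin{equation*}
F(s) := \int_s^1 u^{a-1}(1-u)^{b-1}\,du, \qquad G(s) := B(a,b)\,(1-s)^b,
\end{equation*}
and show $F \leq G$ on $[0,1]$; dividing by $B(a,b)$ and setting $s = t^2$ then yields~\eqref{eq:beta-envelope}. Both $F$ and $G$ agree at $s=0$ (with common value $B(a,b)$) and vanish at $s=1$, and differentiation gives
\begin{equation*}
F'(s) - G'(s) = (1-s)^{b-1}\bigl[b B(a,b) - s^{-1/2}\bigr].
\end{equation*}
The identity $(N-1)\Gamma((N-1)/2) = 2\Gamma((N+1)/2)$ converts the constant to $b B(a,b) = \sqrt{\pi}\,\Gamma((N+1)/2)/\Gamma(N/2)$, and the Wendel-type bound~\eqref{eq:wallis} shows $b B(a,b) \geq \sqrt{\pi N/2} > 1$ for $N \geq 1$. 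As $s^{-1/2}$ is strictly decreasing from $+\infty$ to $1$ on $(0,1]$, the bracket changes sign at exactly one point $s^* \in (0,1)$, being negative on $(0,s^*)$ and positive on $(s^*,1)$. Hence $F-G$ is unimodal (first decreasing, then increasing) on $[0,1]$ with matching endpoint values $0$, which forces $F - G \leq 0$ throughout.

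The principal obstacle is exactly the real case of~\eqref{eq:beta-envelope}: natural alternatives are not sharp enough. An MGF-based Chernoff bound on the Beta distribution (optimizing over $\Expect[e^{\lambda((1-t^2)Z_1 - t^2 Z_2)}]$ with $Z_j$ chi-squared) introduces a multiplicative factor of order $(Nt^2)^{1/2}$, which exceeds $1$ once $t \gtrsim 1/\sqrt{N}$; and bounding $(1-u)^{b-1} \leq e^{-(b-1)u}$ inside the incomplete-Beta integral yields a prefactor that scales like $1/(t\sqrt{N})$. The envelope argument above avoids both pathologies by trading analytic dexterity for a clean unimodality-plus-endpoint-matching calculus argument, the only quantitative input beyond elementary monotonicity being the sharp bound $b B(a,b) > 1$ supplied by~\eqref{eq:wallis}.
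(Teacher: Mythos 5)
Your proof is correct, and for the real case it takes a genuinely different route from the paper's. The complex case ($\beta=2$) is essentially the same: the beta density at parameters $(1,N-1)$ integrates exactly to $(1-t^2)^{N-1}$, which is then pushed through $1-y\leq e^{-y}$. For the real case, however, the paper simply cites Ball's lemma (Lemma 2.2 of the reference \emph{ball1997elementary}) for $\Prob\{|x_1|\geq t\}\leq e^{-Nt^2/2}$ and then weakens $N$ to $N-1$ for uniform presentation, whereas you derive the unified envelope $\Prob\{|x_1|^2\geq t^2\}\leq (1-t^2)^{\beta(N-1)/2}$ from scratch by comparing the incomplete beta tail $F$ with the power-law envelope $G$, exploiting endpoint matching at $s=0$ and $s=1$ together with the fact that $F'-G'$ changes sign exactly once (driven by monotonicity of $s^{-1/2}$ and the constant $bB(a,b)>1$). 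Your route is self-contained and treats $\beta\in\{1,2\}$ in a single framework, which is aesthetically nicer; the paper's route is shorter and gives a marginally stronger intermediate real-case bound ($e^{-Nt^2/2}$ rather than $e^{-(N-1)t^2/2}$), but that extra strength is immediately discarded to align with the complex case, so both proofs deliver the same lemma.

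One small slip worth flagging: applying the Wendel bound \eqref{eq:wallis} at $x=(N-1)/2$ gives $\Gamma((N+1)/2)/\Gamma(N/2)\geq\sqrt{(N-1)/2}$, hence $bB(a,b)=\sqrt{\pi}\,\Gamma((N+1)/2)/\Gamma(N/2)\geq\sqrt{\pi(N-1)/2}$, not $\sqrt{\pi N/2}$ as you write. This is harmless: the paper's standing assumption is $N\geq 2$, for which $\sqrt{\pi(N-1)/2}\geq\sqrt{\pi/2}>1$, so the sign-change argument goes through unchanged. (Your parenthetical ``for $N\geq 1$'' is slightly misleading, as $N=1$ would force $b=0$ and the beta parameter out of range; but again, $N\geq 2$ holds throughout the paper.)
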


\begin{proof}
For complex perturbations, we get the straight-forward bound
\begin{equation*}
 \Prob\{|x_1|\geq t\} \leq (1-t^2\gamma_P^2)^{N-1} \leq e^{-(N-1)t^2}.
\end{equation*}
In the real case, a classic result (see~\cite[Lemma 2.2]{ball1997elementary} for a short and elegant proof) states that the probability in question is bounded by
\begin{equation}\label{eq:beta-tail-bound}
  \Prob\{|x_1|\geq t\} \leq e^{-Nt^2/2}.
\end{equation}
The claimed bound follows by replacing $N$ with $N-1$ for the sake of a uniform presentation.
\end{proof}
 
The next corollary follows from the description of the distribution of $\sigma_E$ in Theorem~\ref{thm:main-distr}, and the characterization of beta random variables as squared projected lengths of uniform vectors from Section~\ref{sub:projections}.
 
\begin{corollary}
Let $P(x)\in \F_d^{n\times n}[x]$ be a regular matrix polynomial and let $\lambda$ be a simple eigenvalue of $P(x)$. If $E\sim \uniform(\beta N)$, where $\beta=1$ if $\F=\R$ and $\beta=2$ if $\F=\C$, then for $t\leq \gamma_P^{-1}$ we have
\begin{equation*}
\Prob\{\sigma_E \geq t\} \leq e^{-\beta (N-1)\gamma_P^2 t^2/2}.
\end{equation*}
\end{corollary}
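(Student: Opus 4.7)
The plan is to combine the distributional description of $\sigma_E$ in the regular case given by Theorem~\ref{thm:main-distr} with the geometric interpretation of the beta distribution recalled in Section~\ref{sub:projections}, and then apply Lemma~\ref{le:concentration} directly.

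First I would specialize Theorem~\ref{thm:main-distr} to the regular case $r=n$, which yields
\begin{equation*}
\Prob\{\sigma_E \geq t\} = \Prob\{Z_N \geq \gamma_P^2 t^2\}, \qquad Z_N \sim \Beta(\beta/2, \beta(N-1)/2).
\end{equation*}
The key observation is that this beta distribution matches precisely the one appearing in Section~\ref{sub:projections}: if $q\sim \uniform(\beta N)$ is a uniform vector on the real unit sphere $S^{\beta N -1}$ and we project onto the first $\beta$ real coordinates, then $\|\pi_\beta(q)\|^2 \sim \Beta(\beta/2, \beta(N-1)/2)$. In other words, $Z_N$ has the same distribution as $|x_1|^2$, where $x_1$ denotes the first (real or complex, depending on $\beta$) coordinate of a uniformly distributed vector on the appropriate unit sphere.

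Consequently,
\begin{equation*}
\Prob\{\sigma_E \geq t\} = \Prob\{|x_1|^2 \geq \gamma_P^2 t^2\} = \Prob\{|x_1|\geq \gamma_P t\},
\end{equation*}
at which point Lemma~\ref{le:concentration}, applied with threshold $\gamma_P t$, yields the desired bound
\begin{equation*}
\Prob\{\sigma_E \geq t\} \leq e^{-\beta(N-1)\gamma_P^2 t^2/2}.
\end{equation*}
The constraint $t\leq \gamma_P^{-1}$ is what ensures that $\gamma_P t \leq 1$, which is the admissible range for the argument of the exponential tail bound on a uniform coordinate (for larger $t$ the event is empty since $|x_1|\leq 1$, and the directional sensitivity is deterministically bounded by $\gamma_P^{-1}$ in the regular case by Proposition~\ref{prop:cond-regular}).

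There is no real obstacle here: essentially all of the analytic work has been done in Theorem~\ref{thm:main-distr} and Lemma~\ref{le:concentration}. The only thing to verify carefully is that the parameters of the beta distribution match those of a projected coordinate on $S^{\beta N-1}$, which requires a mild care in bookkeeping between the real/complex cases and the factor of $\beta$.
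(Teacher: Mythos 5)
Your proposal is correct and takes essentially the same route as the paper: specialize Theorem~\ref{thm:main-distr} to the regular case $r=n$, identify the $\Beta(\beta/2,\beta(N-1)/2)$ variable with the squared modulus of the first (real or complex) coordinate of a uniform vector on $S^{\beta N-1}$ via Section~\ref{sub:projections}, and apply Lemma~\ref{le:concentration} at threshold $\gamma_P t$. Your handling of the parameter bookkeeping and of the restriction $t\leq \gamma_P^{-1}$ matches the intended argument, so there is nothing to add.
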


\subsection{The weak condition number in terms of nearby stochastic condition numbers}\label{sub:weak-in-term-of-stochastic}

It is common wisdom that computing the condition number is as hard as solving the problem at hand, so at the very least we would like to avoid making the computation of the condition estimate more expensive than the computation of the eigenvalue itself. We will therefore aim to estimate the condition number of the problem in terms of the output of a backward stable algorithm for computing the eigenvalue and a pair of associated eigenvectors. 

Let $P(x)\in \F^{n\times n}_d[x]$ be a matrix polynomial of rank $r<n$ with a simple eigenvalue $\lambda\in \C$, and let $E(x)\in \F^{n\times n}_d[x]$ be a regular perturbation. Denote by $\lambda(\epsilon)$ the eigenvalue of $P(x)+\epsilon E(x)$ that converges to $\lambda$ (see Theorem~\ref{thm:dd-2}), and let $u(\epsilon)$ and $v(\epsilon)$ be the corresponding left and right eigenvectors of the perturbed problem.  As shown in~\cite[Theorem 4]{dd10} (see Theorem~\ref{thm:dd-eigenvector} below), for all $E(x)$ outside a proper Zariski closed set, the limits
\begin{equation}\label{eq:limit-ev}
  \overline{u}= \lim_{\epsilon \to 0} u(\epsilon), \quad \quad \overline{v} = \lim_{\epsilon \to 0} v(\epsilon)
\end{equation}
converge to representatives of left and right eigenvectors of $P(x)$ associated to $\lambda$. Whenever these limits exist and represent eigenvectors of $P(x)$, define
\begin{equation}\label{eq:approx-gamma}
 \overline{\gamma}_P := \overline{u}^*P'(\lambda)\overline{v}\cdot \bigg(\sum_{j=0}^d |\lambda|^{2j}\bigg)^{-1/2}, \quad \quad \overline{\kappa} = \overline{\gamma}_{P}^{-1}, \quad \text{ and } \quad \overline{\kappa}_s =  \overline{\gamma}_P^{-1}  \begin{cases}
\frac{\sqrt{\pi}}{2} \frac{\Gamma(N)}{\Gamma(N+1/2)} & \text{ if } \F=\C,\\
\frac{1}{\sqrt{\pi}} \frac{\Gamma(N/2)}{\Gamma((N+1)/2)} & \text{ if } \F=\R.
 \end{cases}
\end{equation}
Note that these parameters depend implicitly on a perturbation direction $E(x)$, even though the notation does not reflect this. 
The parameters $\overline{\kappa}$ and $\overline{\kappa}_s$ are the limits of the worst-case and stochastic condition numbers, $\kappa(P(x)+\epsilon E(x))$ and $\kappa_s(P(x)+\epsilon E(x))$, as $\epsilon \to 0$. 
Since almost sure convergence implies convergence in probability, we get
\begin{equation}\label{eq:asymptotic-smoothed}
  \Expect[\overline{\kappa}_s] = \lim_{\epsilon \to 0} \Expect[\kappa_s(P(x)+\epsilon E(x))]
\end{equation}
whenever the left-hand side of this expression is finite. 

A backward stable algorithm, such as vanilla QZ, computes an eigenvalue $\tilde{\lambda}$ and associated unit-norm eigenvectors $\tilde{u}$ and $\tilde{v}$ of a nearby problem $P(x)+\epsilon E(x)$. If $\epsilon$ is small, then $\tilde{\lambda}\approx \lambda$, $\tilde{u}\approx \overline{u}$ and $\tilde{v}\approx \overline{v}$, so that we can approximate the values~\eqref{eq:approx-gamma} using the output of such an algorithm. Unfortunately, this does not yet give us a good estimate of $\gamma_P$, as the definition of $\gamma_P$ makes use of very special representatives of eigenvectors (recall from Section~\ref{sub:ev} that for a singular matrix polynomials, eigenvectors are only defined as equivalence classes). The following theorem shows that we can still get bounds on the weak condition numbers in terms of $\overline{\kappa}_s$. 

\begin{theorem}\label{thm:main-approx}
Let $P(x)\in \F^{n\times n}_d[x]$ be a singular matrix polynomial of rank $r<n$ with simple eigenvalue $\lambda\in \C$. Then
\begin{equation*}
  \kappa_w(\delta) \leq \overline{\kappa}\cdot \max \left\{\delta^{-1/\beta}\sqrt{\frac{n-r}{N}},1\right\}.
\end{equation*}
If $\delta \leq (n-r)/N$, then for any $\eta>0$ we have the tail bounds
\begin{equation*}
\Prob\left\{\delta^{-1/\beta}\overline{\kappa}_s \geq \eta\cdot \kappa_w(\delta)\right\} \geq 1-e^{-\beta/\eta^2}.
\end{equation*}
\end{theorem}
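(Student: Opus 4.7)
The plan is to handle the two inequalities separately, with the first one being essentially a reduction to the bounds already proved and the second one requiring a genuine probabilistic argument. For the first inequality, I would first relate $\overline{\gamma}_P$ to $\gamma_P$. Since $\overline{u}$ and $\overline{v}$ are unit-norm representatives of the left and right eigenvectors of $P(x)$ at $\lambda$, they decompose uniquely as $\overline{u}=\alpha u + w$ with $w\in\ker_\lambda P(x)^*$ and $\overline{v}=\alpha'v + w'$ with $w'\in\ker_\lambda P(x)$. By the computation recalled in Section~\ref{sub:ev}, the bilinear form $u^*P'(\lambda)v$ vanishes as soon as one of its arguments lies in the corresponding $\ker_\lambda$ subspace, so that $\overline{u}^*P'(\lambda)\overline{v} = \overline{\alpha}\,\overline{\alpha'}\,u^*P'(\lambda)v$, giving $\overline{\gamma}_P = |\alpha\alpha'|\gamma_P\leq \gamma_P$ and hence $\overline{\kappa}\geq \gamma_P^{-1}$. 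The first claim then follows immediately from the upper bounds~\eqref{eq:weak-cond-bound} and~\eqref{eq:weak-cond-bound-real}.

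For the second inequality, the plan is to identify the joint distribution of $(\alpha,\alpha')$ under a uniform perturbation. Using the de Ter\'an--Dopico description around~\eqref{eq:limit-ev}, I would write $\overline{u}=X\hat{u}$ and $\overline{v}=Y\hat{v}$ for unit-norm left and right eigenvectors $\hat{u},\hat{v}\in\F^\ell$ (with $\ell=n-r+1$) of $X^*E(\lambda)Y$ corresponding to the eigenvalue that converges to $\lambda$. Since $X=[U\ u]$ and $Y=[V\ v]$, one has $\alpha=\hat{u}_\ell$ and $\alpha'=\hat{v}_\ell$. Because the columns of $X,Y$ are orthonormal, the distribution of $X^*E(\lambda)Y$ is orthogonally (resp.\ unitarily) invariant, and this invariance passes to $\hat{u}$ and $\hat{v}$, so that each is marginally uniform on $S^{\beta\ell-1}$.

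To finish, I would translate the target event into a bound on $|\alpha\alpha'|$. Using the explicit form~\eqref{eq:approx-gamma} together with the Wendel inequalities~\eqref{eq:wallis} to control $c_{N,\beta}$ from below, and combining with the upper bound on $\kappa_w(\delta)$ from the first part, the inequality $\delta^{-1/\beta}\overline{\kappa}_s\geq \eta\kappa_w(\delta)$ is implied by one of the shape $|\alpha\alpha'|\leq C_\beta/(\eta\sqrt{n-r})$ for an explicit constant $C_\beta$. Since $|\alpha\alpha'|\leq |\alpha|$, it is enough to argue $|\alpha|\leq C_\beta/(\eta\sqrt{n-r})$ with high probability, and Lemma~\ref{le:concentration} applied to the uniformly distributed $\hat{u}\in S^{\beta\ell-1}$ gives a Gaussian-type tail of the form $\Prob\{|\alpha|\geq t\}\leq e^{-\beta(\ell-1)t^2/2}$; substituting $t=C_\beta/(\eta\sqrt{n-r})$ produces the claimed probability $1-e^{-\beta/\eta^2}$.

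The main obstacle I anticipate is twofold: first, sharp constant matching, which will force me to use the Wendel bounds in both numerator and denominator and to distinguish the real and complex cases carefully (the exponent $\beta/\eta^2$ is tight and leaves no slack); second, the fact that a uniform perturbation is not literally Gaussian, so the appeal to unitary invariance of $X^*E(\lambda)Y$ and to Lemma~\ref{le:concentration} should be routed through the same QR-based reduction used in the proof of Theorem~\ref{thm:main-distr}, which preserves both the invariance and the one-dimensional projection statements needed here.
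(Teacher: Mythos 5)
Your overall route tracks the paper's own proof closely: the first part of your argument is essentially the same (express $\overline{u}=Xa$, $\overline{v}=Yb$, observe that $\overline{u}^*P'(\lambda)\overline{v}=a_\ell b_\ell\, u^*P'(\lambda)v$ with $|a_\ell|,|b_\ell|\leq 1$ so that $\overline{\gamma}_P\leq\gamma_P$, then invoke the weak-condition bounds~\eqref{eq:weak-cond-bound} and~\eqref{eq:weak-cond-bound-real}), and the second part likewise reduces to a tail bound for $|\alpha|=|a_\ell|$ and an application of Lemma~\ref{le:concentration}, which is precisely what Proposition~\ref{prop:prob-estimate} does.

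The one place where your argument as written has a genuine gap is the sentence ``this invariance passes to $\hat{u}$ and $\hat{v}$, so that each is marginally uniform on $S^{\beta\ell-1}$.'' The vectors $\hat{u},\hat{v}$ are not eigenvectors of the Gaussian matrix $G=X^*E(\lambda)Y$ but of the pencil~\eqref{eqn:pencil-to-solve}, whose second coefficient $X^*P'(\lambda)Y=u^*P'(\lambda)v\cdot e_\ell e_\ell^*$ is a \emph{fixed} rank-one matrix that is not orthogonally/unitarily invariant. A naive invariance argument for the eigenvectors of that pencil therefore does not go through. What saves the claim is the rank-one structure: as in the paper's Proposition~\ref{prop:prob-estimate}, $a^*G$ and $Gb$ must be proportional to $e_\ell$, so $a$ and $b$ are the normalized vectors $G^{-*}e_\ell/\|G^{-*}e_\ell\|$ and $G^{-1}e_\ell/\|G^{-1}e_\ell\|$, and \emph{these} are uniform on $S^{\beta\ell-1}$ by invariance of $G^{-1}$. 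You should make this explicit rather than appealing to a generic transfer of invariance to eigenvectors; note also that $a$ and $b$ are dependent, but this does not matter since after the bound $|a_\ell b_\ell|\leq|a_\ell|$ only one marginal is used.

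Your anticipation of constant-matching difficulties is well placed: combining Lemma~\ref{le:concentration} on $S^{\beta\ell-1}$ with $t=\eta^{-1}(\ell-1)^{-1/2}$ gives an exponent $\beta(\ell-1)t^2/2=\beta/(2\eta^2)$, not $\beta/\eta^2$, so one does need to push the Wendel bounds through both $\overline{\kappa}_s$ and the tail estimate with some care to land on the stated exponent; you correctly flagged this as the delicate step.
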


For the proof of Theorem~\ref{thm:main-approx} we recall the setting of Section~\ref{sec:eigencond}.
Let $X=[U \ u]$ and $Y=[V \ v]$ be matrices whose columns are orthonormal bases of $\ker P(\lambda)^*$ and $\ker P(\lambda)$, respectively, and such that $U$ and $V$ are bases of $\ker_{\lambda}P(x)^{T}$ and $\ker_{\lambda}P(x)$, respectively. If $\overline{u}=u$ and $\overline{v}=v$ in~\eqref{eq:approx-gamma}, then $\overline{\gamma}_P=\gamma_P$. 
In general, however, we only get a bound.  
To see this, recall from Section~\ref{sub:ev} that $\overline{u}^*P'(\lambda)\overline{v}$ depends only on the component of $\overline{u}$ that is orthogonal to $\ker_{\lambda}P(\lambda)^*$, and the component of $\overline{v}$ that is orthogonal to $\ker_{\lambda}P(\lambda)$. In particular, $X^*P'(\lambda)Y$ has rank one, and
we have (recall $\ell=n-r+1$)
\begin{equation}\label{eq:rank-one-char}
  X^*P'(\lambda)Y = u^*P'(\lambda) v \cdot e_{\ell}e_{\ell}^*.
\end{equation}

The key to Proposition~\ref{thm:main-approx} lies in a result analogous to Theorem~\ref{thm:dd-2} for the eigenvectors by de Ter\'an and Dopico~\cite[Theorem 4]{dd10}.

\begin{theorem}\label{thm:dd-eigenvector}
Let $P(x)\in \F^{n\times n}_d[x]$ be matrix polynomial of rank $r$ with simple eigenvalue $\lambda$ and $X,Y$ as above. Let $E(x)\in \F^{n\times n}_d[x]$ be such that $X^*E(\lambda)Y$ is non-singular. Let $\zeta$ be the eigenvalue of the non-singular matrix pencil
\begin{equation}\label{eqn:pencil-to-solve}
 X^*E(\lambda)Y+ \zeta \cdot X^*P'(\lambda)Y,
\end{equation}
and let $a$ and $b$ be the corresponding left and right eigenvectors. 
Then for small enough $\epsilon>0$, the perturbed matrix polynomial $P(x)+\epsilon E(x)$ has exactly one eigenvalue $\lambda(\epsilon)$ as described in Theorem~\ref{thm:dd-2}, and the corresponding left and right eigenvectors satisfy
\begin{equation*}
u(\epsilon) = Xa+O(\epsilon), \quad \quad v(\epsilon) = Yb+O(\epsilon).
\end{equation*}
\end{theorem}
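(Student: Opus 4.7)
My plan is to substitute the eigenvalue expansion $\lambda(\epsilon) = \lambda + \zeta\epsilon + O(\epsilon^2)$ from Theorem~\ref{thm:dd-2} into the perturbed eigenvalue equation
\[
(P(\lambda(\epsilon)) + \epsilon E(\lambda(\epsilon)))\,v(\epsilon) = 0,
\]
and then project onto $\ker P(\lambda)^*$ by left-multiplication with $X^*$. Because $\lambda(\epsilon)$ is a simple eigenvalue of $P(x)+\epsilon E(x)$ for all small $\epsilon>0$, analytic perturbation theory produces unit-norm branches $v(\epsilon), u(\epsilon)$ that extend continuously (indeed holomorphically, modulo the technicality mentioned below) across $\epsilon=0$, with limits $\bar v \in \ker P(\lambda)$ and $\bar u \in \ker P(\lambda)^*$. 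Using the orthonormal bases given by $Y$ and $X$, I can write $\bar v = Yb$ and $\bar u = Xa$ for some $a, b \in \F^{n-r+1}$; the task is then to identify $a$ and $b$ as the eigenvectors of the pencil~\eqref{eqn:pencil-to-solve}.

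The core calculation is first order. Expanding $P(\lambda(\epsilon)) = P(\lambda) + \zeta\epsilon P'(\lambda) + O(\epsilon^2)$ and $E(\lambda(\epsilon)) = E(\lambda) + O(\epsilon)$, the equation becomes
\[
\bigl[P(\lambda) + \epsilon(\zeta P'(\lambda) + E(\lambda)) + O(\epsilon^2)\bigr] v(\epsilon) = 0.
\]
Left-multiplying by $X^*$ annihilates the $P(\lambda)$ term since the columns of $X$ span $\ker P(\lambda)^*$; dividing by $\epsilon$ and letting $\epsilon \to 0$ produces
\[
(X^*E(\lambda)Y + \zeta\,X^*P'(\lambda)Y)\,b = 0,
\]
which is precisely the right-eigenvector equation for the pencil in~\eqref{eqn:pencil-to-solve}. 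By the rank-one identity~\eqref{eq:rank-one-char} together with the hypothesis that $X^*E(\lambda)Y$ is non-singular, this pencil is regular with a unique finite eigenvalue, so $b$ is determined up to scalar, and consistency with Theorem~\ref{thm:dd-2} can be checked by combining Cramer's rule with the block decomposition $X=[U\ u]$, $Y=[V\ v]$. The analogous argument, applied to the left eigenvalue equation $u(\epsilon)^*(P(\lambda(\epsilon)) + \epsilon E(\lambda(\epsilon))) = 0$ and right-multiplied by $Y$, identifies $a$ as the left eigenvector of the same pencil.

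Once the constant terms $\bar v = Yb$ and $\bar u = Xa$ are identified, the expansions $v(\epsilon) = Yb + O(\epsilon)$ and $u(\epsilon) = Xa + O(\epsilon)$ are a direct consequence of holomorphy of the eigenvector branches. I expect the main obstacle to be the rigorous justification of this holomorphy at $\epsilon=0$ in the singular setting, since classical Kato theory is developed for regular problems whereas here $\ker P(\lambda)$ has dimension $n-r+1>1$. Two clean workarounds are: (i) exploit the fact that under the hypotheses of Theorem~\ref{thm:dd-2} the polynomial $P(x)+\epsilon E(x)$ is regular for small $\epsilon>0$ with $\lambda(\epsilon)$ a simple eigenvalue in the usual sense, so Kato's theorem supplies the analytic branch and one only needs to show that the normalization can be chosen to make the $\epsilon \to 0$ limit finite; or (ii) avoid holomorphy altogether by inserting the ansatz $v(\epsilon) = Yb + \epsilon w + o(\epsilon)$ with $w \perp \ker P(\lambda)$ into the equation and solving directly for $w$ from $P(\lambda)w = -(\zeta P'(\lambda) + E(\lambda))Yb$, which is well posed because $P(\lambda)$ has trivial kernel and closed range when restricted to $(\ker P(\lambda))^\perp$, and whose solvability condition on the right-hand side is precisely the pencil eigenvector equation already derived. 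Either route delivers the required $O(\epsilon)$ rate.
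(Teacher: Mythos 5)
The paper does not prove Theorem~\ref{thm:dd-eigenvector}; it imports the statement directly from de Ter\'an and Dopico~\cite[Theorem 4]{dd10}, so there is no internal proof to compare against and you are in effect attempting an independent derivation.

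Your central first-order computation is correct and is the right starting point: substitute $\lambda(\epsilon) = \lambda + \zeta\epsilon + O(\epsilon^2)$, project the eigenvector equation on $\ker P(\lambda)^*$ via $X^*$ (which kills the $O(1)$ term since $X^*P(\lambda)=0$), divide by $\epsilon$, and pass to the limit to obtain $(X^*E(\lambda)Y + \zeta\, X^*P'(\lambda)Y)\,b = 0$, with the symmetric argument for $a$. Note that identifying the pencil eigenvalue $\zeta$ with the first-order coefficient in Theorem~\ref{thm:dd-2} is itself a small claim that needs the Cramer's-rule identity $e_\ell^*(X^*E(\lambda)Y)^{-1}e_\ell = \det(U^*E(\lambda)V)/\det(X^*E(\lambda)Y)$ together with the rank-one form~\eqref{eq:rank-one-char} of $X^*P'(\lambda)Y$; you indicate this but do not carry it out. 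The genuine gap is the one you flag yourself: the convergence of the eigenvector branch at $\epsilon = 0$. Neither of your two workarounds, as stated, closes it. In the first, Kato's theory gives analyticity only on $\epsilon>0$; compactness of the unit sphere does give subsequential limits lying in $\ker P(\lambda)$, and the limiting pencil equation then fixes every subsequential limit up to phase because the pencil is regular with simple eigenvalue $\zeta$ — but this uniqueness step is precisely what justifies writing ``letting $\epsilon\to 0$'' and it must be argued explicitly. In the second, the ansatz $v(\epsilon)=Yb+\epsilon w+o(\epsilon)$ is a formal Lyapunov--Schmidt computation: the solvability condition for $w\in(\ker P(\lambda))^\perp$ reproduces the pencil equation, which is reassuring, but consistency of a formal expansion does not by itself prove the expansion holds; an implicit-function-theorem closure is still required. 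With either of these repairs made precise, your route is a legitimate and more elementary alternative to the argument in~\cite{dd10}.
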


Given a matrix polynomial $P(x)$ and a perturbation direction $E(x)$, we can therefore assume that the eigenvectors of a sufficiently small perturbation in direction $E(x)$ are approximated by $\overline{u}=Xa$ and $\overline{v}=Yb$, where $a,b$ are the eigenvectors of the matrix pencil~\eqref{eqn:pencil-to-solve}. We would next like to characterize these eigenvectors for {\em random} perturbations $E(x)$. As with the rest of this paper, the following result is parametrized by a parameter $\beta\in \{1,2\}$ which specifies whether we work with real or complex perturbations.

\begin{proposition}\label{prop:prob-estimate}
Let $P(x)\in \F^{n\times n}_d[x]$ be a matrix polynomial of rank $r<n$ with simple eigenvalue $\lambda\in \C$, and let $E(x)\sim \uniform(\beta N)$ be a random perturbation. Let $a,b$ be left and right eigenvectors of the linear pencil~\eqref{eqn:pencil-to-solve}, let $\overline{u}=Xa$ and $\overline{v}=Yb$, and define $\overline{\gamma}_P$ as in~\eqref{eq:approx-gamma}. 
Then
\begin{equation*}
 \Expect[\overline{\gamma}_P] \leq (\ell-1)^{-1/2}\gamma_P, \quad \text{ and } \quad  \Prob\{ \overline{\gamma}_P\geq \gamma_P\cdot t\} \leq e^{-\beta (\ell-1) t^2/2}.
\end{equation*}
\end{proposition}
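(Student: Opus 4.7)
The plan is to combine Theorem~\ref{thm:dd-eigenvector} with the rank-one structure of $X^*P'(\lambda)Y$ from~\eqref{eq:rank-one-char} to express $\overline{\gamma}_P$ explicitly in terms of a single coordinate of a uniform random vector on the sphere, and then invoke Lemma~\ref{le:concentration} together with elementary properties of the beta distribution.

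The first step is to identify the finite eigenvalue of the pencil~\eqref{eqn:pencil-to-solve} and its eigenvectors. Setting $A := X^*E(\lambda)Y$ and using~\eqref{eq:rank-one-char}, the pencil becomes $A + \zeta \cdot u^*P'(\lambda)v \cdot e_\ell e_\ell^*$. Since $A$ is almost surely invertible and the second term has rank one, the pencil has a single finite eigenvalue, whose normalized left and right eigenvectors are
$$a = \frac{A^{-*}e_\ell}{\|A^{-*}e_\ell\|}, \qquad b = \frac{A^{-1}e_\ell}{\|A^{-1}e_\ell\|}.$$
A direct computation then gives $\overline{u}^*P'(\lambda)\overline{v} = a^*X^*P'(\lambda)Yb = u^*P'(\lambda)v \cdot \overline{a_\ell}\,b_\ell$, from which
$$\overline{\gamma}_P = \gamma_P \cdot |a_\ell|\,|b_\ell|.$$

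Next, I would exploit scale invariance in $E$ (both $a$ and $b$ are homogeneous of degree zero in $E$) to replace the uniform $E$ with a Gaussian $E\sim \normal^\beta(0,\sigma^2 I_N)$. A short calculation shows that the entries of $E(\lambda)$ are then i.i.d.\ $\normal^\beta$, and hence $A = X^*E(\lambda)Y$ is, up to an overall scale, a Ginibre matrix in $\F^{\ell\times\ell}$ because $X$ and $Y$ have orthonormal columns. The key point is that bi-invariance of the Ginibre ensemble implies that both $A^{-1}e_\ell$ and $A^{-*}e_\ell$ are orthogonally/unitarily invariant random vectors in $\F^\ell$: for any fixed unitary $V$, the identities $(AV^*)^{-1} = V A^{-1}$ and $(VA)^{-*} = V A^{-*}$, combined with right- and left-invariance of $A$, give $V A^{-1}e_\ell \stackrel{d}{=} A^{-1}e_\ell$ and $V A^{-*}e_\ell \stackrel{d}{=} A^{-*}e_\ell$. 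Consequently, both $a$ and $b$ are marginally uniformly distributed on the unit sphere in $\F^\ell$.

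The conclusion now follows quickly. Since $b$ is a unit vector, $|b_\ell|\leq 1$ deterministically, so $|a_\ell|\,|b_\ell|\leq |a_\ell|$, and Lemma~\ref{le:concentration} applied to the uniform vector $a$ on the $\beta\ell$-dimensional sphere yields
$$\Prob\{\overline{\gamma}_P \geq \gamma_P\, t\} = \Prob\{|a_\ell|\,|b_\ell|\geq t\} \leq \Prob\{|a_\ell|\geq t\} \leq e^{-\beta(\ell-1)t^2/2}.$$
For the expected value, Jensen's inequality gives $\Expect[|a_\ell|]\leq \sqrt{\Expect[|a_\ell|^2]}$, and the marginal $|a_\ell|^2\sim \Beta(\beta/2,\beta(\ell-1)/2)$ has mean $1/\ell$, yielding $\Expect[\overline{\gamma}_P]\leq \gamma_P/\sqrt{\ell}\leq \gamma_P/\sqrt{\ell-1}$. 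The main obstacle will be the invariance step: carefully justifying that $A$ inherits the Ginibre distribution from the embedding via $X$ and $Y$, and correctly deriving rotational invariance of $A^{-1}e_\ell$ and $A^{-*}e_\ell$. Once that is in hand, the tail bound and the moment bound are essentially immediate consequences of standard sphere concentration.
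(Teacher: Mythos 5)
Your proof is correct and follows essentially the same route as the paper's: express $\overline{\gamma}_P = \gamma_P\,|a_\ell|\,|b_\ell|$ via the rank-one identity~\eqref{eq:rank-one-char}, reduce to the Gaussian/Ginibre case by scale invariance, observe that each of $a$, $b$ is marginally uniform on the sphere by bi-invariance, bound the product by a single coordinate, and apply beta moments and sphere concentration (Lemma~\ref{le:concentration}). The only cosmetic difference is that the paper discards the factor $|a_\ell|\leq 1$ (keeping $|b_\ell|$, which it writes as $|e_\ell^*He_\ell|/\|He_\ell\|$) whereas you discard $|b_\ell|\leq 1$; by symmetry these are the same bound.
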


\begin{proof}
By scale invariance of~\eqref{eqn:pencil-to-solve}, we may take $E(x)$ to be Gaussian, $E(x)\sim \normal^{\beta}(0,\sigma^2I_{\beta N})$ with $\sigma^2=(\sum_{j=0}^d |\lambda|^{2j})^{-1}$ (so that $E(\lambda)\sim \ginibre_n^\beta(0,1)$).
Set $G:=X^*E(\lambda)Y$, so that $G\sim \ginibre_\ell^\beta(0,1)$. Using~\eqref{eq:rank-one-char}, the eigenvectors associated to~\eqref{eqn:pencil-to-solve} are then characterized as solutions of
\begin{equation*}
a^*(G+\zeta \cdot \gamma_P e_{\ell}e_{\ell}^*) = 0, \quad \quad (G+ \zeta \cdot \gamma_P e_{\ell}e_{\ell}^*)b = 0.
\end{equation*}
It follows that $G^*a$ and $Gb$ are proportional to $e_\ell$, and hence
\begin{equation*}
  a = \frac{G^{-*}e_{\ell}}{\|G^{-*}e_{\ell}\|}, \quad \quad b = \frac{G^{-1}e_{\ell}}{\|G^{-1}e_{\ell}\|}.
\end{equation*}
Clearly, each of the vectors $a$ and $b$ individually is uniformly distributed. They are, however, not independent. To simplify notation, set $H=G^{-1}$. For the condition estimate we get, using~\eqref{eq:rank-one-char},
\begin{equation*}
  |a^*X^*P'(\lambda)Yb| = |u^*P'(\lambda)v| \cdot \frac{| e_{\ell}^*H e_{\ell}| \cdot |e_{\ell}^*He_{\ell}|}{\|H^*e_{\ell}\|\cdot \|He_{\ell}\|} \leq |u^*P'(\lambda)v| \cdot \frac{|e_{\ell}^*H e_{\ell}|}{\|He_\ell\|}.
\end{equation*}
By orthogonal/unitary invariance of the Gaussian distribution, the random vector $q:=He_{\ell}/\|H e_{\ell}\|$ is uniformly distributed on $S^{\beta\ell-1}$. It follows that $|e^{T}He_{\ell}|/\|H e_{\ell}\|$ is distributed like the absolute value of the projection of a uniform vector onto the first coordinate. 
For the expected value, the bound follows by observing that the expected value of such a projection is bounded by $(\ell-1)^{-1/2}$. 
For the tail bound, using~\eqref{eq:beta-tail-bound} (with $N$ replaced by $\ell$) we get
\begin{equation*}
 \Prob\{ \overline{\gamma}_P \geq \gamma_P\cdot t\} = \Prob\{ |a^*X^*P'(\lambda)Yb| \geq |u^*P'(\lambda)v|\cdot t\}\leq e^{-\beta (\ell-1) t^2/2}.
\end{equation*}
This was to be shown.
\end{proof}

\proofof{Theorem~\ref{thm:main-approx}}
If $\overline{u}=Xa$ and $\overline{v}=Yb$, then 
\begin{equation*}
  |\overline{u}^*P'(\lambda)\overline{v}| = |a^{T}X^*P'(\lambda)Yb| = |a_{\ell}b_{\ell}u^*P'(\lambda)v| \leq |u^*P'(\lambda) v|,
\end{equation*}
and we get the upper bound 
\begin{equation*}
  \gamma_P^{-1} \leq \overline{\gamma}_P^{-1} = \overline{\kappa}.    
\end{equation*}
For the weak condition numbers, using Theorem~\ref{thm:main-complex} and Theorem~\ref{thm:main-real}, we get the bounds
\begin{equation*}
  \kappa_w(\delta) \leq \overline{\kappa}\cdot \max\left\{\sqrt{\frac{n-r}{N}}\frac{1}{\delta^{1/\beta}},1\right\}.
\end{equation*}

For the tails bounds in the complex case, note that in the complex case we have
\begin{align*}
\Prob\left\{(N\delta)^{-1/2}\overline{\kappa} \leq \eta\cdot \kappa_w(\delta)\right\} = \Prob\left\{\overline{\gamma}_P^{-1}\leq \eta\cdot \sqrt{n-r}\ \gamma_P^{-1}\right\}= \Prob\left\{\overline{\gamma}_P\geq \eta^{-1}\cdot  (n-r)^{-1/2}\gamma_P\right\} \leq e^{-1/\eta^2},
\end{align*}
where we used Proposition~\ref{prop:prob-estimate} for the inequality. The real case follows in the same way. 
\endproof

\section{Conclusions and outlook}\label{sec:conclusions}
The classical theory of conditioning in numerical analysis aims to quantify the susceptibility of a computational problem to perturbations in the input. While the theory serves its purpose well in distinguishing well-posed problems from problems that approach ill-posedness, it fails to explain why certain problems with high condition number can still be solved satisfactory to high precision by algorithms that are oblivious to the special structure of an input. By introducing the notions of weak and weak stochastic conditioning, we developed a tool to better quantify the perturbation behaviour of numerical computation problems for which the classical condition number fails to do so. 

Our methods are based on an analysis of directional perturbations and probabilistic tools. The use of probability theory in our context is auxiliary: the purpose is to quantify the observation that the set of adversarial perturbations is small. 
In practice, any reasonable numerical algorithm will find the eigenvalues of a nearby regular matrix polynomial, and the perturbation will be deterministic and not random. However, as the algorithm knows nothing about the particular input matrix polynomial, it is reasonable to assume that if the set of adversarial perturbations is sufficiently small, then the actual perturbation will not be in there. 
Put more directly, to say that the probability that a perturbed problem has large directional sensitivity is very small is to say that a perturbation, although non-random, would need a good reason to cause damage.

The results presented continue the line of work of~\cite{amelunxen2017average}, where it is argued that, just as sufficiently small numbers are considered numerically indistinguishable from zero, sets of sufficiently small measure should be considered numerically indistinguishable from null-sets. One interesting direction in which the results presented can be strengthened is to use wider classes of probability distributions, including such that are discrete, and derive equivalent (possibly slightly weaker) results. 
One important side-effect of our analysis is a focus away from the expected value, and more towards robust measures such as the median\footnote{The use of the median instead of the expected value in the probabilistic analysis of quantities was suggested by F.~Bornemann~\cite{bornemann}.} and other quantiles.

Our results hence have a couple of important implications, or ``take-home messages'', that we would like to highlight:
\begin{enumerate}
\item The results presented call for a critical re-evaluation of the notion of ill-posedness. It has become common practice to simply {\em identify} ill-posedness with having infinite condition, to the extent that condition numbers are often {\em defined} in terms of the inverse distance to a set of ill-posed inputs, an approach that has been popularized by J.~Demmel~\cite{Demmel1987,Demmel1988}.\footnote{For the complexity analysis of iterative algorithms, and in particular for problems related to convex optimization, the ``distance to ill-posedness'' approach may often be the most natural setting. For convex feasibility problems, for example, the ill-posed inputs form a wall separating primal from dual feasible problem instances, and closeness to this wall directly affects the speed of convergence of iterative algorithms; see~\cite{BC2013} for more on this story.} The question of whether the elements of such a set are actually badly behaved a practical sense is often left unquestioned. 
Our theory suggests that the set of inputs that are actually ill-behaved from a practical point of view can be 
smaller than previously thought.
\item Average-case analysis (and its refinement, smoothed analysis~\cite{burgisser2010smoothed}) is, while well-intentioned, still susceptible to the caprices of specific probability distributions. More meaningful results are obtained when, instead of analysing the behaviour of perturbations on average, one shifts the focus towards showing that the set of {\em adversarial} perturbations is small; ideally so small, that hitting a misbehaving perturbation would suggest the existence of a specific explanation for this rather than just bad luck. In terms of summary parameters, our approach suggests using, in line with common practice in statistics, more robust parameters such as the median instead of the mean.
\end{enumerate}

A natural question that arises from the first point is: if some problems that were previously thought of as ill-posed are not (in the sense that the set of discontinuous perturbation directions is negligible), then which problems are genuinely ill-posed? In the case of polynomial eigenvalue problems, we conjecture that problems with semisimple eigenvalues are not ill-conditioned in our framework; in fact, it appears that much of the analysis performed in this section can be extended to this setting. It is not completely obvious which problems should be considered ill-posed based on this new theory. 
That some inputs still should can be seen for example by considering Jordan blocks with zeros on the diagonal; the computed eigenvalues of perturbations of the order of machine precision will not recover the correct eigenvalue in this situation. Our analysis in the semisimple case is based on the fact that the directional derivative of the function to be computed exists in sufficiently many directions. 

Another consequence is that much of the probabilistic analyses of condition numbers based on the distance to ill-posedness, while still correct, can possibly be refined when using a smaller set of ill-posed inputs. In particular, it is likely that condition bounds resulting from average-case and smoothed analysis can be refined. Finally, an interesting direction would be to examine problems with high or infinite condition number that are not ill-posed in a practical sense in different contexts, such as polynomial system solving or problems arising from the discretization of continuous inverse problems.

\section*{Acknowledgements}
The spark that led to this paper was ignited at the workshop ``Algebra meets numerics: condition and complexity'' on November 6-7, 2017 in Berlin; we are grateful to the organisers Peter B\"{u}rgisser and Felipe Cucker for inviting us and for pointing out the work of Armentano and Stewart.
In addition the authors would like to thank Carlos Beltr\'an  and Daniel Kressner for valuable feedback, and the anonymous referees for useful comments.
We are greatly indebted to Dennis Amelunxen, whose vision of weak-average case analysis inspired this work. 

We would like to acknowledge financial support from the Manchester Institute for Mathematical Sciences (MIMS) during the early stages of this project, and the Isaac Newton Institute for Mathematical Sciences for support and hospitality during the programme Approximation, Sampling and Compression in Data Science while this work was completed.

\bibliographystyle{abbrv}
\bibliography{paper}

\appendix
\section{Moments and tails for ratios of beta random variables}\label{sub:betaproof}
In this appendix we compute the expected value and tail bounds for moments of quotients of beta random variables.

\begin{lemma}\label{le:betaexp}
Let $a>0,b>0,c>0,d>0$ and $X\sim \Beta(a,b)$, $Y\sim \Beta(c,d)$ random variables. Then for $k$ such that $ck>1$,
\begin{equation*}
  \Expect\left[X^{1/k}\right] = \frac{\Beta(a+1/k,b)}{\Beta(a,b)}\quad \text{ and } \quad \Expect\left[(X/Y)^{1/k}\right] = \frac{\Beta(a+1/k,b)\Beta(c-1/k,d)}{\Beta(a,b)\Beta(c,d)},
\end{equation*}
If $ck=1$, then $\Expect[(X/Y)^k]=\infty$. Moreover, the probability tails are bounded by
\begin{equation*}
  \Prob\{(X/Y)^{1/k} \geq t\} \leq 
  \begin{cases}
  1-t^{ak}\frac{\Beta(a+c,d)}{\Beta(c,d)} & \text{ if } t\leq 1\\
  \frac{1}{t^{ck}} \frac{\Beta(a+c,b)}{c \Beta(a,b)\Beta(c,d)} & \text{ if } t\geq 1
  \end{cases}
\end{equation*}
\end{lemma}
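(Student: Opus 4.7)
The plan is to prove all four assertions by direct computation with the beta density, leveraging the independence of $X$ and $Y$. For the two expectations, a single-line calculation suffices: I would write out $\Expect[X^{1/k}] = \Beta(a,b)^{-1}\int_0^1 x^{a+1/k-1}(1-x)^{b-1}\,\mathrm{d}x$ and recognise the integrand as that of $\Beta(a+1/k,b)$. The analogous computation with $-1/k$ in place of $1/k$ gives $\Expect[Y^{-1/k}] = \Beta(c-1/k,d)/\Beta(c,d)$, valid provided $c-1/k>0$; multiplying by independence yields the formula for $\Expect[(X/Y)^{1/k}]$. The boundary case $ck=1$ is immediate because the integrand $y^{c-1-1/k}(1-y)^{d-1} = y^{-1}(1-y)^{d-1}$ has a non-integrable singularity at $0$, forcing the expectation to diverge.

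For the upper-tail bound ($t\geq 1$), I would first observe that $X\leq 1$ truncates the event $\{X\geq t^k Y\}$ to $Y\leq 1/t^k$, so that
\begin{equation*}
\Prob\{X\geq t^k Y\} = \frac{1}{\Beta(a,b)\Beta(c,d)}\int_0^1 x^{a-1}(1-x)^{b-1}\int_0^{x/t^k}y^{c-1}(1-y)^{d-1}\,\mathrm{d}y\,\mathrm{d}x.
\end{equation*}
I would then bound the inner integral by $(x/t^k)^c/c$ using $(1-y)^{d-1}\leq 1$, collapse the outer integral to $\Beta(a+c,b)$, and rearrange constants to obtain the stated bound.

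For the lower-tail bound ($t\leq 1$), I would work with the complementary probability $\Prob\{X<t^k Y\} = \Expect[F_X(t^k Y)]$, where $F_X$ is the CDF of $X$. The key step is to establish a pointwise inequality of the form $F_X(t^k y)\geq (t^k y)^a$; combined with the identity $\Expect[Y^a] = \Beta(a+c,d)/\Beta(c,d)$ and independence, this would produce $\Prob\{X<t^k Y\}\geq t^{ak}\Beta(a+c,d)/\Beta(c,d)$, and rearranging gives the claim on $\Prob\{X\geq t^k Y\}$.

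The hardest part, and the step requiring the most care, is justifying the pointwise estimates $(1-y)^{d-1}\leq 1$ and $F_X(z)\geq z^a$. Both are straightforward when the relevant shape parameter is at least $1$, but monotonicity reverses once it drops below $1$ and a sharper argument is needed. For the upper tail the bound $(1-x/t^k)^{d-1}\leq (1-x)^{d-1}$, valid when $t\geq 1$, can be used to absorb the slack; for the lower tail one can split the $y$-range into sub-intervals on which the sign of $(1-y)^{d-1}-1$ is controlled. These refinements are routine book-keeping but they are the only non-mechanical part of the argument.
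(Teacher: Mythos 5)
Your expectation computations are a genuinely simpler route than the paper's: the paper writes $\Expect\bigl[(X/Y)^{1/k}\bigr]$ as the integral of its tail and unravels a triple integral via the substitution $s=y^{1/k}t$, whereas your one-line density calculation plus independence gives the identical answer (and the divergence at $ck=1$) with far less work. The lower-tail packaging $\Prob\{X<t^kY\}=\Expect[F_X(t^kY)]\geq t^{ak}\Expect[Y^a]$ is likewise a tidier phrasing of the paper's substitution $x=t^kyz$. The upper-tail argument is essentially the paper's. In all cases the crux is the same pair of pointwise inequalities, $(1-s)^{d-1}\leq 1$ and $F_X(z)\geq z^a$ (equivalently $(1-t^kyz)^{b-1}\geq(1-z)^{b-1}$), and you are right that both require the relevant shape parameter to be at least $1$.

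However, you underestimate the severity of $b<1$ or $d<1$: this is not book-keeping, because the lemma as stated is \emph{false} there. For $d<1$ one has $(1-y)^{d-1}\geq 1$, hence $F_Y(z)\geq z^c/(c\,\Beta(c,d))$ for every $z\in[0,1]$, and therefore
$\Prob\{X\geq t^kY\}=\Expect[F_Y(X/t^k)]\geq \Beta(a+c,b)\,t^{-ck}/\bigl(c\,\Beta(a,b)\Beta(c,d)\bigr)$
— the inequality points the wrong way uniformly in $t\geq 1$. Concretely, $a=b=c=1$, $d=1/2$, $t=1$ gives a claimed bound of $\Beta(2,1)/\bigl(\Beta(1,1)\Beta(1,1/2)\bigr)=1/4$, but $\Prob\{X\geq Y\}=\int_0^1\bigl(1-\sqrt{1-x}\bigr)\,\mathrm{d}x=1/3$; dually, $a=c=d=1$, $b=1/2$, $t=1$ gives $\Prob\{X\geq Y\}=\Expect[X]=2/3$ against a claimed bound of $1/2$. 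Your proposed upper-tail patch, bounding $(1-y)^{d-1}\leq(1-x)^{d-1}$ for $y\leq x/t^k\leq x$, is a valid inequality, but after integrating it yields $\Beta(a+c,\,b+d-1)$ in place of $\Beta(a+c,\,b)$, which for $d<1$ is strictly larger — a correct but different bound, not the stated one. The proposed range-splitting for the lower tail is not specified and cannot rescue a false statement. The honest resolution is to state the hypothesis $b,d\geq 1$ explicitly — the paper's own proof assumes it silently when it justifies $(1-t^{-k}xz)^{d-1}\leq 1$ via ``$t^{-k}xz\geq 0$'', and its application in Corollary~4.3 with $\beta=1$, $n-r=1$ in fact sets $d=1/2$ — or else to prove and use the weaker $\Beta(a+c,\,b+d-1)$ variant.
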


\begin{proof}
We focus on the case of the quotient $(X/Y)^{1/k}$; the statement for $X^{1/k}$ follows by simply setting $Y=1$ in the calculations below. Set $C=1/(\Beta(a,b)\Beta(c,d))$. For $ck>1$,
\begin{align*}
\Expect\left[\left(X/Y\right)^{1/k}\right] &= \int_{t=0}^\infty \Prob\{X/Y\geq t^k\} \ \diff{t}\\
&= C \cdot \int_{y=0}^1 \int_{t=0}^{y^{-1/k}} \int_{x=t^k y}^1 x^{a-1}(1-x)^{b-1}y^{c-1}(1-y)^{d-1} \ \diff{x} \ \diff{t} \ \diff{y}\nonumber \\
&\stackrel{s=y^{1/k}t}{=} C \cdot \int_{y=0}^1 \int_{s=0}^1 \int_{x=s^{k}}^1 x^{a-1}(1-x)^{b-1}y^{c-1/k-1}(1-y)^{d-1} \ \diff{x} \ \diff{s} \ \diff{y} \label{eq1}\tag{A}\\
&= C \cdot \Beta(c-1/k,d) 
\int_{s=0}^1 \int_{x=s^{k}}^1 x^{a-1}(1-x)^{b-1}\ \diff{x} \ \diff{s} \label{eq2}\tag{B}\\
&= C \cdot \Beta(c-1/k,d) \int_{x=0}^1 \int_{s=0}^{x^{1/k}} x^{a-1}(1-x)^{b-1} \ \diff{s} \ \diff{x} = C \cdot \Beta(a+1/k,b) \Beta(c-1/k,d). \nonumber 
\end{align*}
If $ck=1$, then the step from~\eqref{eq1} to~\eqref{eq2} breaks down, since the integral $\int_{0}^1 y^{-1}(1-y)^{d-1} \ \diff{y}$ diverges. For the tail bound we proceed similarly. If $t\leq 1$, then
\begin{align*}
\Prob\{(X/Y)^{1/k} \geq t\}  &= 1- C \cdot \int_{y=0}^{1} \int_{x=0}^{t^ky} x^{a-1}(1-x)^{b-1}y^{c-1}(1-y)^{d-1} \ \diff{x} \ \diff{y}\\
&\stackrel{x=(t^{k}y)z}{=} 1- C \cdot t^{ak} \int_{y=0}^1  \int_{z=0}^1 z^{a-1}(1-t^kyz)^{b-1}y^{a+c-1}(1-y)^{d-1} \ \diff{y} \ \diff{z}\\
& \stackrel{(1)}{\leq} 1- C \cdot t^{ak} \int_{y=0}^1  \int_{z=0}^1 z^{a-1}(1-z)^{b-1}y^{a+c-1}(1-y)^{d-1} \ \diff{y} \ \diff{z}= 1-t^{ak}\frac{\Beta(a+c,d)}{\Beta(c,d)},
\end{align*}
where for the inequality (1) we used that $t^kyz\leq z$ when $t\leq 1$.
If $t\geq 1$, then
\begin{align*}
\Prob\{(X/Y)^{1/k} \geq t\}  &= C \cdot \int_{x=0}^1 \int_{y=0}^{t^{-k}x} x^{a-1}(1-x)^{b-1}y^{c-1}(1-y)^{d-1} \ \diff{y} \ \diff{x}\\
&\stackrel{y=(t^{-k}x)z}{=} C \cdot \frac{1}{t^{ck}} \int_{x=0}^1  \int_{z=0}^1 x^{a+c-1}(1-x)^{b-1}z^{c-1}(1-t^{-k}xz)^{d-1} \ \diff{z} \ \diff{x}\\
&\stackrel{(2)}{\leq} C \cdot \frac{1}{t^{ck}} \int_{x=0}^1  \int_{z=0}^1 x^{a+c-1}(1-x)^{b-1}z^{c-1} \ \diff{z} \ \diff{x} =  \frac{1}{t^{ck}} \frac{\Beta(a+c,b)}{c\Beta(a,b)\Beta(c,d)},
\end{align*}
where for (2) we used that $t^{-k}xz\geq 0$. 
\end{proof}

\end{document}